\newdefinition{defin}{Definition}
\newtheorem{thm}{Theorem}
\newtheorem{prop}[thm]{Proposition}
\newproof{proof}{Proof}
\newtheorem{lem}{Lemma}
\newcommand{\no}{{\mathbb{N}}}
\begin{document}

\begin{frontmatter}
\title{The concept of duality for automata over a changing alphabet and generation of a free group by such automata}
\author[rvt]{Adam Woryna}
\ead{adam.woryna@polsl.pl}
\address[rvt]{Silesian University of Technology, Institute of Mathematics, ul. Kaszubska 23, 44-100 Gliwice, Poland}

\begin{abstract}
In the paper, we deal with the notion of  an automaton  over a changing alphabet, which generalizes the concept of a Mealy-type automaton. We modify the methods based on the idea of a dual  automaton and its action used by B. Steinberg et al. (2011) and M. Vorobets and Ya. Vorobets (2007,2010) \cite{11,13} and adapt them to automata over a changing alphabet. We show that this modification provides some naturally defined automaton representations of a free nonabelian group by a 2-state automaton over a changing alphabet.
\end{abstract}

\begin{keyword}
Mealy automaton \sep group generated by an automaton \sep free group

\MSC[2010]20F05\sep 20E22\sep 20E18\sep 20E08
\end{keyword}
\end{frontmatter}

\section{Introduction}

The combinatorial language of transducers (also known as Mealy automata) turned out to be useful in the group theory for describing groups acting on a rooted tree.  By definition, a Mealy automaton $A$ over a finite alphabet $X$ is a set $Q$ of states equipped with a transition function $\varphi\colon Q\times X\to Q$ and an output function $\psi\colon Q\times X\to X$.  Every state $q\in Q$, through a transition and an output function, induces a transformation  of a tree of finite words $X^*$ over the alphabet $X$. If these transformations are all invertible, then $A$ is called invertible and the group generated by them (with the composition of mappings as a product) is called the group generated by $A$. The extensive presentation of the theory of groups generated by Mealy automata is included in the survey paper \cite{6} (for various aspects and constructions regarding automaton groups and semigroups see also \cite{22,18,19,21,20}).

In recent years the automaton representations of free nonabelian groups are intensively investigated (see for example \cite{1,4,9,11,12,13,15}). The combined results of \cite{11}, \cite{12} and \cite{13} are based on the notion of a dual automaton of a given Mealy automaton and describe the action of a dual automaton to prove that the constructed automata generate free nonabelian groups (see also \cite{10} generalizing some results of \cite{11}). In this way, for every $n\geq 3$ a free nonabelian group of rank $n$ was presented as a group generated by an $n$-state Mealy automaton over the  binary alphabet $X=\{0,1\}$. The invertible automata considered in \cite{11,12,13} have the property that their dual automata are also invertible. Such automata are called bireversible. The groups generated by bireversible automata are very convenient  to study via the action of the corresponding dual automata. Moreover, the transformations of $X^*$ defined by all bireversible automata over the alphabet $X$ form a group  isomorphic to the group of the so-called geometric automorphisms of a free group with the basis $X$ (see \cite{4} or \cite{7}).

Every  Mealy automaton has the same structure in all the moments of the discrete time scale and it defines the group acting on a regular rooted tree. However,   finding an automaton representation of a given abstract group $G$ is difficult in general. For example, the problem of  constructing a 2-state Mealy automaton (over some finite alphabet $X$) generating a free nonabelian group is still open and we even do not know if such an automaton exists. Thus it would be interesting to investigate various directions in generalization of the concept of a Mealy automaton, which allow to find and study representations of   groups. This paper constitutes such an approach. Namely, we use the idea of a changing alphabet and an automaton over a changing alphabet to provide some naturally defined automaton representations of a free nonabelian group by a 2-state automaton over a changing alphabet.

Automata over a changing alphabet  constitute a particular subclass of the class of time-varying automata   working over a changing alphabet (the notion of a time-varying automaton previously concerned automata over a fixed alphabet, and the research on them was mainly concentrated  on their structural properties - see \cite{8}).
The class of time-varying automata over a changing alphabet was first defined in the paper by the author \cite{14}, where he   showed how  this concept can be used for describing groups acting on  spherically homogeneous rooted trees which are not necessarily regular (see also \cite{15,16,17}).  Groups acting on regular rooted trees are studied via the language of time-varying automata (over a fixed alphabet) also in the papers by Anna Erschler \cite{2,3}, and the first implicit appearance of these groups dates back to the paper by R. Grigorchuk \cite{5}.
In the paper \cite{15} we constructed other realizations of a free nonabelian group of rank 2 as a group generated by  time-varying automata, one of which was given by a 2-state automaton over a changing alphabet. However,
transition functions of that automaton are of  diagonal type, which implies quite complicated description of output functions and in this way misses the essential  feature of desirable automaton presentations, namely, simplicity.

A changing alphabet we define as an infinite sequence
$$
X=(X_1, X_2, \ldots)
$$
of finite sets, which are called sets of letters. The set $X^*_{(1)}$ of all words over the changing alphabet $X$ we define as a disjoint union of cartesian products
\begin{equation}\label{e-1}
X_1\times X_2\times\ldots \times X_m,\;\;\;m\geq 1.
\end{equation}
It is assumed that $X^*_{(1)}$ contains also the empty word denoted by $\emptyset$. Since the elements of a word $w\in X^*_{(1)}$ are called letters, we will not separate them by commas and we will write $w=x_1x_2\ldots x_m$, where $x_j\in X_j$  are the letters. For a given $m\geq 1$ the product (\ref{e-1}) constitutes the set of words of the length $m$.  The set $X^*_{(1)}$  has the structure of a spherically homogeneous rooted tree.  The empty word is the root of this tree and two words are connected if and only if they are of the form $w$ and $wx$ for some word $w\in X^*_{(1)}$ and a letter  $x\in X_{|w|+1}$, where $|w|$ denotes the length of  $w$. The $m$-th level ($m\geq 1$)  of the tree $X^*_{(1)}$ (that is the set of vertices at the distance $m$ from the root) consists of all words of the length $m$, and thus coincides with the product (\ref{e-1}). It is worth to see that the tree $X^*_{(1)}$ is regular if and only if the sizes of the sets of letters coincide.

We define an automaton $A$ over a changing alphabet $X=(X_1, X_2, \ldots)$ as a set $Q$ of states equipped with a sequence $\varphi=(\varphi_1, \varphi_2, \ldots)$ of transition functions $\varphi_i\colon Q\times X_i\to Q$ and a sequence $\psi=(\psi_1, \psi_2, \ldots)$ of output functions $\psi_i\colon Q\times X_i\to X_i$. If  $Q$ is finite, then the automaton $A$ is called finite. The sequences of transition and output functions define for every state $q\in Q$ a transformation $A_{1, q}\colon X^*_{(1)}\to X^*_{(1)}$, which we call an automaton function (corresponding to the state $q$ in the first transition of the automaton $A$). Every automaton function  preserves the lengths of words and common beginnings. This implies that the set of  automaton functions defined by  automata   over a changing alphabet $X$ coincides with the set of all endomorphisms of the tree $X^*_{(1)}$. Moreover, if an automaton function is invertible, then the inverse transformation is also an automaton function. Thus the set of all invertible automaton functions coincide with the group $Aut(X^*_{(1)})$ of automorphisms of the tree $X^*_{(1)}$. In this group we distinguish the proper subgroup $FA(X)$, which consists of all invertible automaton functions defined by finite automata.
For a given invertible automaton $A=(X, Q, \varphi, \psi)$  the automaton functions $A_{1, q}$ defined by the states $q\in Q$ of $A$  are all invertible and  the group
$$
G(A)=gp\{A_{1, q}\colon q\in Q\}
$$
generated by them  is called the group generated by the automaton $A$. In particular, if the automaton $A$ is finite, then  $G(A)$ is a finitely generated subgroup of the group $FA(X)$.

In this paper we provide some naturally defined automaton representation of a free nonabelian group of rank 2. We realize this by a 2-state automaton $A=(X, Q, \varphi^A, \psi^A)$ over a changing alphabet, where  the changing alphabet $X=(X_1, X_2, \ldots)$, the set of states and the sequences $\varphi^A=(\varphi^A_1, \varphi^A_2,\ldots)$, $\psi^A=(\psi^A_1, \psi^A_2, \ldots)$ of transition and output functions are  defined as follows:
\begin{itemize}
\item[(a)] $X_i=\{1, 2, \ldots, r_i\}$, where $(r_1, r_2, \ldots)$ is an arbitrary nondecreasing and unbounded sequence of positive integers greater than~1,
\item[(b)] $Q=\{a, b\}$,
\item[(c)] $\varphi^A_i(q, x)=\left\{
\begin{array}{ll}
a, &x=1,\;q=b,\\
b,& x=1,\; q=a,\\
q, &x\neq1,
\end{array}
\right.$
\item[(d)]$
\psi^A_i(q, x)=\left\{
\begin{array}{ll}
\sigma_i(x), &q=a,\\
\tau_i(x), &q=b,
\end{array}
\right.
$
where $\tau_i$ is a transposition $(1,2)$ and $\sigma_i$ is an $r_i$-cycle $(1,2,\ldots, r_i)$  in the symmetric group of $X_i$.
\end{itemize}
The main result of this paper is the following
\begin{thm}\label{t1}
The group $G(A)=gp\{A_{1, a}, A_{1, b}\}$ is a free group of rank 2 generated freely by the automaton functions $A_{1, a}$ and $A_{1, b}$.
\end{thm}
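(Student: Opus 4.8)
The plan is to exploit the duality between $A$ and its dual automaton, whose states are the letters and whose (fixed) alphabet is $Q=\{a,b\}$. First I would record the structural observation that $A$ is \emph{bireversible}: for every letter $x$ the transition map $q\mapsto\varphi^A_i(q,x)$ is a bijection of $Q$ (the transposition $a\leftrightarrow b$ when $x=1$, the identity otherwise), while each output map $x\mapsto\psi^A_i(q,x)$ is the permutation $\sigma_i$ or $\tau_i$; hence both $A$ and its dual are invertible. Writing $A_{i,q}$ for the automaton function of the state $q$ started from the $i$-th transition (so $A_{1,q}$ is as in the statement, and $A_{i,a},A_{i,b}$ are the analogous functions of the sections), the sections of the generators obey $A_{i,a}|_x=A_{i+1,b}$ if $x=1$ and $A_{i+1,a}$ otherwise, and symmetrically for $b$; thus \emph{positive generators have positive sections and negative generators have negative sections}, only the label $a/b$ being toggled, and solely when the threaded letter equals $1$. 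Since $G(A)\le Aut(X^*_{(1)})$ acts faithfully, it suffices to prove that no nonempty reduced word $w=g_1g_2\cdots g_k$ over $\{A_{1,a}^{\pm1},A_{1,b}^{\pm1}\}$ acts trivially on the tree.

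The first key step is a no-cancellation lemma: for a reduced $w$ and any letter $x\in X_1$, the section $w|_x$, computed by threading $x$ through $g_k,\dots,g_1$ via the dual automaton, is again a reduced word of the same length $k$ over the level-two generators. This is exactly where bireversibility enters: a new cancellation $g_j|_{y}=(g_{j+1}|_{y'})^{-1}$ forces the threaded letters $y,y'$ entering $g_j,g_{j+1}$ to satisfy incompatible conditions, and a short case analysis over the adjacent pairs $(g_j,g_{j+1})$ with $g_{j+1}\ne g_j^{-1}$ that could cancel (necessarily of opposite sign and different label) rules each one out, using only that the maps $q\mapsto\varphi^A_i(q,x)$ and $\psi^A_i(q,\cdot)$ are bijective. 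Iterating, every section $w|_u$ at a vertex $u$ of length $m$ is a reduced word of length $k$ in the generators of level $m+1$, and its action on the next level $X_{m+1}$ is the top permutation $\pi=\prod_{j}\pi_{h_j}$, a product of $k$ factors from $\{\sigma_{m+1}^{\pm1},\tau_{m+1}\}$.

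The remaining, and main, step is to exhibit a moved vertex, i.e.\ a descent path $u$ along which this top permutation is nontrivial. The naive idea of transporting $w$ unchanged to a level $n$ with $r_n$ huge (by always threading a letter far from $1,2$, so no label toggles) and invoking $r_n\to\infty$ fails on its own, since some nontrivial reduced words die in \emph{every} symmetric group $S_{r_n}$ under $a\mapsto\sigma,\ b\mapsto\tau$: for instance $b^2\mapsto\tau_{r_n}^2=\mathrm{id}$, and likewise all words expressing relations of $S_N$ that are stable as $N\to\infty$. The tree action distinguishes such words from the identity precisely through the toggling at the letter $1$: e.g.\ $A_{1,b}^2|_{1}=A_{2,b}A_{2,a}$, whose top permutation $\tau_2\sigma_2$ is nontrivial once $r_2\ge 3$. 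Hence the descent must interleave \emph{transport} steps through a large letter (which enlarge the acting alphabet while preserving the combinatorial type of $w$, using that $(r_i)$ is nondecreasing and unbounded) and \emph{toggle} steps through the letter $1$ (which rewrite the sequence of labels), chosen so that after finitely many steps the length-$k$ reduced word has net $\sigma$-content surviving in a cycle $\sigma_n$ of length $r_n>k$ that the few transpositions present cannot annihilate.

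I expect the heart of the difficulty to lie in this last step: proving uniformly, for an arbitrary reduced $w$, that some finite descent reaches a level with nontrivial top permutation. The obstruction to a one-line argument is exactly the stable relations of the symmetric groups, invisible to any single-level image; the role of duality and of the unbounded $r_i$ is to convert ``nontrivial in $F_2$'' into ``eventually active on the tree''. Concretely I would control along the path the sign $\mathrm{sgn}(\pi)=(-1)^{N_b}\big((-1)^{r_{m+1}-1}\big)^{N_a}$, with $N_a,N_b$ the numbers of $a$- and $b$-type letters and $N_a+N_b=k$, to settle the cases where a parity is already decisive, and otherwise force, by toggling, a section in which a single uncancelled long cycle $\sigma_n^{\pm1}$ dominates, so that a point far from $\{1,2\}$ is displaced. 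Verifying that these choices can always be made simultaneously, and that the threaded letters remain in the required ranges, is the technical crux of the proof.
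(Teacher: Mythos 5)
Your setup is sound and matches the paper's: the state-invertibility of the (extended) automaton, the fact that sections of positive generators are positive and of negative generators are negative with only the $a/b$ label toggled at the letter $1$, and the no-cancellation lemma (the paper's Lemma~1, proved by checking that each dual mapping $D_{i,x}$ preserves the sets $\{aa^{-1},bb^{-1}\}$, $\{a^{-1}a,b^{-1}b\}$, $\{ab^{-1},ba^{-1}\}$, $\{a^{-1}b,b^{-1}a\}$) are all correct. You also correctly diagnose the obstruction: a single level only sees the image in a symmetric group, so stable relations such as $\tau_i^2=\mathrm{id}$ are invisible there. But the proposal stops exactly where the proof begins. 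You do not actually produce, for an arbitrary nonempty reduced word, a descent path ending at a moved vertex; you only describe two heuristics (tracking $\mathrm{sgn}(\pi)$, and ``forcing a single uncancelled long cycle to dominate'') and concede that verifying they can be carried out is the crux. The sign invariant cannot work in general (any word mapping to an even permutation at every level, e.g.\ a commutator-like word with balanced exponents, defeats it), and no mechanism is given for the second heuristic.

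The paper's resolution is genuinely different and is the content you are missing: rather than engineering one descent path for $\xi$, it proves that the dual action is \emph{transitive} on the set of freely irreducible words following a fixed pattern (Propositions~5--7, which in turn need the stabilization machinery of Section~4 to compare dual mappings at different time steps, and the decomposition of every pattern into a first part $V_I$ and a periodic second part $V_{II}$). From $B_{1,\xi}=Id$ one then extracts \emph{two} sections of the identity, $B_{i,\xi}=Id$ and $B_{i,\xi_I\widetilde{\xi_{II}}}=Id$, at a common level $i$ with $r_i$ large; their quotient forces $B_{i,\eta}=Id$ for an explicit short word $\eta=\xi_{II}^{-1}\widetilde{\xi_{II}}\in\{a^{r}(b^{-1}a)^{2l}b^{-r},\,a^{-r}(ba^{-1})^{2l}b^{r},\ldots\}$, and for such $\eta$ the top permutation on $X_i$ is directly checked to move the point $3$ or the point $r_i$ because $r_i>2l+r+2$. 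This reduction from an arbitrary reduced word to a one-parameter family of explicit words is the key idea; without it (or a worked-out substitute) the argument has a genuine gap.
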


For the proof, we show  how the concept of duality may be defined and studied for  automata over a changing alphabet.
At first, in Section~2, we provide  basic definitions concerning automata over a changing alphabet and automaton functions defined by such automata. In Section~3,  for a given automaton $(X, Q, \varphi, \psi)$ over a changing alphabet $X=(X_1, X_2, \ldots)$, we introduce the notion of a dual mapping $D_{i, x}\colon Q^*\to Q^*$ corresponding to  any letter $x\in X_i$ in the $i$-th ($i\geq 1$) transition of this automaton and we derive the mutual connection between the action of automaton functions and the action of dual mappings. In Section~4 we introduce the concept of stabilization of an automaton over a changing alphabet. In Section~5 we show that a certain finite and state-invertible automaton $B$ associated with the above defined automaton $A$ is stabilized, and further, in the last section, that the action of dual mappings corresponding to the automaton $B$ is especially convenient to study by using the concept of a pattern (presented also in \cite{11,12,13}). Finally, we show how the orbits of this action determine the properties of the group generated by the automaton $A$, and that the group $G(A)$ is freely generated by the transformations $A_{1, a}$ and $A_{1, b}$.

\section{Automata over a changing alphabet}

Let $\no=\{1,2,\ldots\}$ be the set of positive integers. The changing alphabet is an infinite sequence $X=(X_1, X_2,\ldots)$ of finite nonempty sets, which are called sets of letters. For every $i\in\no$ we consider the set
$$
X^*_{(i)}=\{x_1x_2\ldots x_m\colon x_j\in X_{i+j-1},\;m\in\no\}\cup\{\emptyset\}
$$
of all  words over the changing alphabet $X_{(i)}=(X_i, X_{i+1},\ldots)$, including the empty word $\emptyset$. The length of a word $w=x_1x_2\ldots x_m$ is denoted by $|w|$. If $w\in X^*_{(i)}$ and $v\in X^*_{(i+|w|)}$, then $wv$ denotes a concatenation of  $w$ and $v$. Obviously $wv\in X^*_{(i)}$.

\begin{defin}
An automaton $A$ over a changing alphabet $X=(X_1, X_2,\ldots)$ is a set $Q$ together with two infinite sequences
$\varphi=(\varphi_1, \varphi_2,\ldots)$, $\psi=(\psi_1, \psi_2, \ldots)$,
in which $\varphi_i\colon Q\times X_i\to Q$ and $\psi_i\colon Q\times X_i\to X_i$  are the so-called transition and output functions, respectively. The elements of $Q$ are called the internal states of the automaton $A$. The automaton is called finite if the set $Q$ of its internal states is finite.
\end{defin}

The  automaton $A=(X, Q, \varphi, \psi)$ may be interpreted as a machine, which being at a moment $i\in \no$  in a state $q\in Q$ and reading from the input tape a letter $x\in X_i$, goes to the state $\varphi_i(q, x)$, types on the output tape the letter $\psi_i(q, x)$,  moves both tapes to the next position and then proceeds further to the next moment $i+1$.  The above interpretation defines for any $i\in\no$ and any state $q\in Q$ a natural action $A_{i, q}$  on the set of words $X^*_{(i)}$ as follows
\begin{equation}\label{e1}
A_{i, q}(x_1x_2\ldots x_m)=\psi_i(q_1, x_1)\psi_{i+1}(q_2, x_2)\ldots \psi_{i+m-1}(q_m, x_m),
\end{equation}
where the states $q_j\in Q$ are defined recursively as follows: $q_1=q$ and $q_{j+1}=\varphi_{i+j-1}(q_j, x_j)$ for $1\leq j<m$.
We also let $A_{i, q}(\emptyset)=\emptyset$.

It is convenient to present an automaton $A=(X, Q, \varphi, \psi)$ as a labeled, directed, locally finite graph with the following set of vertices:
$$
\{(i, q)\colon i\in\no,\; q\in Q\}.
$$
Two vertices are connected with an arrow if and only if they are of the form $(i, q)$ and $(i+1, \varphi_i(q, x))$ for some $i\in\no$, $q\in Q$ and $x\in X_i$. This arrow is labeled by $x$, starts from the vertex $(i, q)$ and goes to the vertex $(i+1, \varphi_i(q, x))$. Each vertex $(i, q)$ is labeled by the corresponding state function
\begin{equation}\label{state-fun}
\sigma_{i,q}\colon X_i\to X_i,\;\;\;\sigma_{i,q}(x)=\psi_i(q,x).
\end{equation}
To make the graph of the automaton clear,  we will substitute a large number of arrows connecting two given vertices and having  the same direction for a one multi-arrow labeled by suitable letters and if the labelling of such a multi-arrow is obvious we will omit this labelling.

The image of a word $w=x_1x_2\ldots x_m$ under the action $A_{i, q}$ can be easily found using the graph of the automaton. One must find a directed path starting in a vertex $(i, q)$ with  consecutive labels $x_1, x_2, \ldots, x_m$. Such a path exists and is  unique. If $\pi_1, \pi_2, \ldots, \pi_m$ are the labels of consecutive vertices in this path, then the $j$-th letter ($1\leq j\leq m$) of the image $A_{i, q}(w)$ is equal to $\pi_j(x_j)$.

The transformation $A_{i, q}$ is called the automaton function corresponding to the state $q\in Q$ in the $i$-th transition of the automaton $A$. Every automaton function $A_{1, q}\colon X^*_{(1)}\to X^*_{(1)}$ corresponding to some state $q\in Q$ in the first transition of $A$ is called  the automaton function defined by $A$. The semigroup generated by the automaton functions  defined by $A$ we denote by $S(A)$:
$$
S(A)=sgp\{A_{1, q}\colon q\in Q\}.
$$

\begin{defin}
An automaton $A=(X, Q, \varphi, \psi)$ is called invertible if for every $i\in\no$ and every state $q\in Q$ the state function $\sigma_{i, q}\colon X_i\to X_i$ is invertible.
\end{defin}

If the automaton $A$ is invertible, then we define the inverse automaton of $A$ as the  automaton $I=(X, Q, \varphi', \psi')$ with the following sequences $\varphi'=(\varphi'_1, \varphi'_2,\ldots)$ and $\psi'=(\psi'_1, \psi'_2,\ldots)$ of transition and output functions:
$$
\varphi'_i(q, x)=\varphi_i(q,\sigma_{i, q}^{-1}(x)),\;\;\;\;\psi'_i(q, x)=\sigma_{i, q}^{-1}(x).
$$

\begin{prop}\label{prop123}
If the automaton $A$ is invertible and $I$ is the inverse automaton of $A$, then  the automaton functions $A_{i, q}\colon X_{(i)}^*\to X_{(i)}^*$  are all invertible and $A_{i, q}^{-1}=I_{i, q}$.
\end{prop}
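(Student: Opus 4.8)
The plan is to verify directly that $I_{i,q}$ is a two-sided inverse of $A_{i,q}$ by exploiting the recursive, level-by-level structure of the action defined in (\ref{e1}). Since every automaton function preserves word length, it suffices to argue on words of a fixed length and proceed by induction on that length, with the empty word serving as the trivial base case $A_{i,q}(\emptyset)=I_{i,q}(\emptyset)=\emptyset$.

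The key structural fact I would isolate first is the one-step recursion read off from (\ref{e1}): for a letter $x\in X_i$ and a word $v\in X^*_{(i+1)}$ one has
$$
A_{i,q}(xv)=\sigma_{i,q}(x)\,A_{i+1,\varphi_i(q,x)}(v),
$$
and likewise $I_{i,q}(yw)=\sigma'_{i,q}(y)\,I_{i+1,\varphi'_i(q,y)}(w)$, where by the definition of the inverse automaton the state function of $I$ satisfies $\sigma'_{i,q}=\sigma_{i,q}^{-1}$ (this is where invertibility of $A$ is used). With these decompositions in hand, I would compute $I_{i,q}\bigl(A_{i,q}(xv)\bigr)$ by setting $y=\sigma_{i,q}(x)$ and peeling off the first letter: the leading output becomes $\sigma_{i,q}^{-1}(\sigma_{i,q}(x))=x$, as desired.

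The hard part — really the crux of the whole argument — is to check that the internal states match up so that the induction closes. Concretely, one must verify that the state $I$ enters after reading $y$ coincides with the state $A$ enters after reading $x$, i.e. that
$$
\varphi'_i(q,y)=\varphi_i\bigl(q,\sigma_{i,q}^{-1}(y)\bigr)=\varphi_i\bigl(q,\sigma_{i,q}^{-1}(\sigma_{i,q}(x))\bigr)=\varphi_i(q,x).
$$
Denoting this common state by $q'$, the computation collapses to $I_{i,q}(A_{i,q}(xv))=x\,I_{i+1,q'}\bigl(A_{i+1,q'}(v)\bigr)$, and the inductive hypothesis finishes it, giving $I_{i,q}\circ A_{i,q}=\mathrm{id}$ on $X^*_{(i)}$.

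Finally I would promote this one-sided identity to genuine invertibility. Since each $A_{i,q}$ maps the finite set of words of a given length into itself and admits $I_{i,q}$ as a left inverse there, it is injective and hence bijective on every level, so $I_{i,q}$ is automatically a two-sided inverse; alternatively, observing that $A$ is in turn the inverse automaton of $I$ (because $\sigma'_{i,q}=\sigma_{i,q}^{-1}$ is again invertible and inverting twice returns $\varphi$ and $\psi$), the symmetric instance of the same computation yields $A_{i,q}\circ I_{i,q}=\mathrm{id}$ directly. Either route gives $A_{i,q}^{-1}=I_{i,q}$ for all $i$ and $q$.
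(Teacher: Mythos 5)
Your proof is correct and follows essentially the same route as the paper's: both verify $I_{i,q}\circ A_{i,q}=\mathrm{id}$ by checking that the state sequences of $A$ and $I$ coincide (the crux in each case being $\varphi'_i(q,\sigma_{i,q}(x))=\varphi_i(q,x)$), and both then upgrade the one-sided identity to genuine invertibility by noting that the maps preserve the finite levels of the tree. Your alternative finish --- observing that $A$ is in turn the inverse automaton of $I$ and invoking symmetry to get $A_{i,q}\circ I_{i,q}=\mathrm{id}$ directly --- is also valid, but the substance of the argument is the same as the paper's.
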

\begin{proof}
Let $x_1x_2\ldots x_m\in X^*_{(i)}$. Let $y_1y_2\ldots y_m\in X^*_{(i)}$ be the image of $x_1x_2\ldots x_m$ under $A_{i, q}$ and let $z_1z_2\ldots z_m\in X^*_{(i)}$ be the image of $y_1y_2\ldots y_m$ under $I_{i, q}$. By definition of $A_{i, q}$ we have: $y_j=\psi_{i+j-1}(q_j, x_j)$ for $1\leq j\leq m$,
where the states $q_j$  are defined recursively as follows: $q_1=q$, $q_{j+1}=\varphi_{i+j-1}(q_j, x_j)$ ($1\leq j\leq m$).
By definition of $I_{i, q}$ we have: $z_j=\psi'_{i+j-1}(s_j, y_j)$ for $1\leq j\leq m$,
where the states $s_j$  are defined recursively as follows: $s_1=q$, $s_{j+1}=\varphi'_{i+j-1}(s_j, y_j)$ ($1\leq j\leq m$). Let us assume that $s_k=q_k$ for some $1\leq k\leq m$. Since $\sigma_{i+k-1, q_k}(x_k)=\psi_{i+k-1}(q_k, x_k)=y_k$, we have $\sigma_{i+k-1, q_k}^{-1}(y_k)=x_k$. Thus
\begin{eqnarray*}
s_{k+1}&=&\varphi'_{i+k-1}(s_k, y_k)=\varphi'_{i+k-1}(q_k, y_k)=\\
&=&\varphi_{i+k-1}(q_k,\sigma_{i+k-1,q_k}^{-1}(y_k))
=\varphi_{i+k-1}(q_k, x_k)=q_{k+1}.
\end{eqnarray*}
Since $s_1=q_1$, we obtain: $s_j=q_j$ for $1\leq j\leq m$. Thus for $1\leq j\leq m$ we have
$$
z_j=\psi'_{i+j-1}(s_j, y_j)=\psi'_{i+j-1}(q_j, y_j)
=\sigma^{-1}_{i+j-1, q_j}(y_j)=x_j.
$$
As a result we obtain $I_{i,q}A_{i,q}=Id_{X^*_{(i)}}$. By the above, we need only to show the invertibility of $A_{i, q}$. But since all considered maps preserves levels, which are finite sets, the condition $I_{i,q}A_{i,q}=Id_{X^*_{(i)}}$ implies that both maps are bijections on levels. In consequence these maps are invertible.
\end{proof}

Thus if  the automaton $A=(X, Q, \varphi, \psi)$ is invertible, then the automaton functions $A_{1, q}$ ($q\in Q$) generate a group, which we denote by $G(A)$ and call the group generated by $A$:
$$
G(A)=gp\{A_{1, q}\colon q\in Q\}.
$$

Let $A=(X, Q, \varphi, \psi)$ and $A'=(X, Q', \varphi', \psi')$ be two automata over the same changing alphabet $X=(X_1, X_2, \ldots)$. If we assume that the sets $Q$ and $Q'$ are disjoint, then we may define an automaton $A''=(X, Q\cup Q', \varphi'', \psi'')$ in which every transition function $\varphi''_i$ coincides with $\varphi_i$ when restricted to $Q\times X_i$  and with $\varphi'_i$ when restricted to $Q'\times X_i$; every output function $\psi''_i$ coincides with $\psi_i$ when restricted to $Q\times X_i$  and with $\psi_i'$ when restricted to $Q'\times X_i$. The automaton $A''$ is called the union of automata $A$ and $A'$ and is denoted by $A\cup A'$. If $Q\cap Q'\neq\emptyset$, then we may modify  the automata $A$ and $A'$ by renaming some of their states so that the sets of states in the modified automata are disjoint and then we may define the union $A\cup A'$ as the union of the modified automata.

\section{Dual mappings $D_{i, x}$ of an automaton}

Let $A=(X, Q, \varphi, \psi)$ be an automaton over the changing alphabet $X$. Let
$$
Q^*=\{q_1q_2\ldots q_n\colon q_j\in Q,\;n\in\no\}\cup\{\emptyset\}=\bigcup_{n\geq 0} Q^n
$$
be a free monoid over the set $Q$ of states. For any $i\in\no$ and any letter $x\in X_i$ the transition function $\varphi_i$ induces the mapping  $D_{i, x}\colon Q^*\to Q^*$ defined as follows:
\begin{equation}\label{e3}
D_{i, x}(q_1q_2\ldots q_n)=\varphi_i\left(q_1, x_1\right)\varphi_{i}\left(q_2, x_2\right)\ldots\varphi_{i}\left(q_n, x_n\right),
\end{equation}
where the letters $x_j\in X_{i}$ are defined recursively as follows: $x_1=x$ and $x_{j+1}=\psi_{i}\left(q_j, x_j\right)$ for $1\leq j<n$.
We also let $D_{i, x}(\emptyset)=\emptyset$. We call $D_{i, x}$ the dual mapping corresponding to the letter $x\in X_i$ in the $i$-th transition of the automaton $A$.

The dual mappings $D_{i, x}$ ($i\in\no$, $x\in X_i$) of the automaton $A$ can be defined in terms of a dual graph $\Gamma$ of $A$, which we define  as a labeled, directed graph with the set of vertices
$$
\{(i, x)\colon i\in\no,\;x\in X_i\}.
$$
Two vertices in $\Gamma$ are connected with an arrow if and only if they are of the form $(i, x)$ and $(i, \psi_i(q, x))$ for some $i\in\no$, $q\in Q$ and $x\in X_i$. This arrow is labeled by $q|\varphi_i(q, x)$, starts from the vertex $(i, x)$ and goes to the vertex $(i, \psi_i(q, x))$. The left entry $q$ of the label $q|\varphi_i(q, x)$ is referred to  as the input entry while the right entry $\varphi_i(q, x)$ is referred to  as the output entry. For a given $i\in\no$ the set of vertices $\{(i, x)\colon x\in X_i\}$ forms a component $\Gamma_i$ (not necessary connected) of the graph $\Gamma$ such that $\Gamma$ is a disjoint union of the components $\Gamma_i$. To compute $D_{i, x}(\xi)$ for some $\xi=q_1q_2\ldots q_n\in Q^*$ we  find a directed path in a component $\Gamma_i$ of $\Gamma$ which  starts in a vertex $(i, x)$ and such that the  consecutive input entries in the labelling of this path are $q_1, q_2, \ldots, q_n$. Such a path exists and is unique.  If $q_1', q_2', \ldots, q_n'$ are the consecutive output entries in this labelling, then $D_{i, x}(\xi)=q_1'q_2'\ldots q_n'$.

Let $i\in\no$ and $\xi=q_1q_2\ldots q_n\in Q^*$. We  define $A_{i, \xi}$ as the composition of automaton functions  corresponding to  consecutive states in $\xi$ in the $i$-th transition of the automaton $A$, that is $A_{i, \xi}=A_{i, q_n}\ldots A_{i, q_2} A_{i, q_1}$.
Also, we let $A_{i, \emptyset}=Id_{X^*_{(i)}}$. In particular,  we have $A_{i, \xi\eta}=A_{i,\eta}A_{i, \xi}$ for all $\xi, \eta\in Q^*$.
It is worth to see  that every element of the semigroup $S(A)$ is of the form $A_{1,\xi}$ for some $\xi\in Q^*$.
For any $w=x_1x_2\ldots x_m\in X_{(i)}^*$ we  also define $D_{i, w}$ as the composition of dual mappings of the automaton $A$ corresponding to  consecutive letters of $w$ as follows: $D_{i, w}=D_{i+m-1, x_m}\ldots D_{i+1, x_2}D_{i, x_1}$. Also, we let $D_{i, \emptyset}=Id_{Q^*}$. In particular $D_{i, wv}=D_{i+|w|, v}D_{i, w}$ for all $i\in\no$, $w\in X^*_{(i)}$ and $v\in X^*_{(i+|w|)}$.

\begin{prop}\label{p1}
For any $i\in\no$, $\xi,\eta\in Q^*$, $w\in X^*_{(i)}$ and $v\in X^*_{(i+|w|)}$ the following equalities hold
\begin{equation}\label{ee1}
A_{i, \xi}(wv)=A_{i, \xi}(w)A_{i+|w|, D_{i,w}(\xi)}(v).
\end{equation}
\begin{equation}\label{ee2}
D_{i, w}(\xi\eta)=D_{i, w}(\xi)D_{i, A_{i, \xi}(w)}(\eta),
\end{equation}
\end{prop}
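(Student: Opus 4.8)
The plan is to establish both identities by induction on the length of the words being acted upon, exploiting the recursive nature of the definitions \eqref{e1} and \eqref{e3}. Both formulas have the flavor of a ``cocycle'' or compatibility relation between the two actions, so I expect the key to be carefully tracking how the states (resp.\ letters) propagate as we read through a word (resp.\ a string of states).

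For \eqref{ee1}, I would fix $\xi = q_1 q_2 \ldots q_n$ and proceed by induction on $|w|$. The base case $|w| = 0$ (so $w = \emptyset$) is immediate, since $A_{i,\xi}(\emptyset v) = A_{i,\xi}(v)$ and $D_{i,\emptyset}(\xi) = \xi$, reducing the right-hand side to $A_{i,\xi}(v)$. For the inductive step, write $w = x w'$ with $x \in X_i$. The cleanest route is to reduce to the single-letter case first: I would prove that $A_{i,\xi}(xu) = A_{i,\xi}(x)\, A_{i+1, D_{i,x}(\xi)}(u)$ for a single letter $x$, by unwinding the definition of $A_{i,q}$ as a composition $A_{i,q_n}\cdots A_{i,q_1}$ and observing that the state reached after processing $x$ through the chain $q_1, \ldots, q_n$ is exactly recorded by $D_{i,x}$ applied to $\xi$. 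Indeed, the recursion $x_{j+1} = \psi_i(q_j, x_j)$ defining $D_{i,x}$ matches precisely the letters fed into each successive automaton function in the composition, so the ``residual'' states that continue to act on $u$ are the components of $D_{i,x}(\xi)$. Once the single-letter identity is in hand, the general case follows by writing $D_{i,w} = D_{i+|w|-1,\,x_{|w|}}\cdots D_{i,x_1}$ and iterating, using the already-noted relation $D_{i,wv} = D_{i+|w|,v} D_{i,w}$.

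For \eqref{ee2}, I would similarly fix $w$ and induct on the length $n$ of $\xi = q_1 \ldots q_n$, or equivalently prove the single-state case $D_{i,w}(q\,\eta) = D_{i,w}(q)\, D_{i, A_{i,q}(w)}(\eta)$ first. The point is dual: in computing $D_{i,w}(q\eta)$, the letter-sequence that the state $q$ ``passes on'' to the remaining states $\eta$ is exactly the image word $A_{i,q}(w)$, because the output letters produced as $q$ processes $w$ become the input letters seen by the next state. This is the genuine duality between the two constructions, and verifying that the propagated letter at each level agrees with the corresponding letter of $A_{i,q}(w)$ is where the bookkeeping must be done with care.

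The main obstacle I anticipate is purely notational: both $A$ and $D$ are defined through compositions indexed in the reverse order (e.g.\ $A_{i,\xi} = A_{i,q_n}\cdots A_{i,q_1}$), and the recursively defined auxiliary states/letters must be matched against the right endpoints of these compositions. The cleanest strategy is therefore to isolate the two single-step lemmas (one letter, one state) and then bootstrap to the full statement using the already-established concatenation rules $A_{i,\xi\eta} = A_{i,\eta}A_{i,\xi}$ and $D_{i,wv} = D_{i+|w|,v}D_{i,w}$, which keeps each inductive step short and avoids a simultaneous double induction.
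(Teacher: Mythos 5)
Your plan is correct and would yield a complete proof, but it is organized differently from the paper's argument. The paper runs a single simultaneous induction on $|\xi|+|w|$ in which the two identities are proved together and are genuinely interdependent: in the inductive step for (\ref{ee1}) one appends a state $q$ to $\xi$, applies the inductive hypothesis twice, and must then recombine $D_{i,w}(\xi)D_{i,A_{i,\xi}(w)}(q)$ into $D_{i,w}(\xi q)$ --- which is precisely an instance of (\ref{ee2}); dually, the inductive step for (\ref{ee2}) invokes (\ref{ee1}). The only case computed by hand there is $|\xi|=|w|=1$. You instead decouple the two identities: you prove the one-letter case of (\ref{ee1}) for arbitrary $\xi$ by unwinding the composition $A_{i,q_n}\cdots A_{i,q_1}$ (in effect an induction on $|\xi|$ with $|w|=1$) and then induct on $|w|$ alone, and symmetrically for (\ref{ee2}). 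This works because the fragment of (\ref{ee2}) that your induction for (\ref{ee1}) implicitly needs, namely $D_{i,x}(\xi q)=D_{i,x}(\xi)\,\varphi_i(q,A_{i,\xi}(x))$, is immediate from the definition (\ref{e3}) rather than requiring the full second identity (and dually). Your route buys modularity and makes the duality more transparent, since each key lemma is a direct reading of the definitions; the paper's route buys uniformity, with a single induction, a single hand-computed base case, and no nested unwinding. The bookkeeping you flag --- matching the recursively defined letters $x_{j+1}=\psi_i(q_j,x_j)$ against the inputs of the successive factors, and the propagated output letters against $A_{i,q}(w)$ --- is exactly where the content lies, and your plan places it correctly.
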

\begin{proof}
If $\xi=\emptyset$ then
\begin{eqnarray*}
A_{i,\xi}(wv)&=&A_{i,\emptyset}(wv)=wv=A_{i, \emptyset}(w)A_{i+|w|,\emptyset}(v)=\\
&=&A_{i, \xi}(w)A_{i+|w|, D_{i, w}(\xi)}(v)
\end{eqnarray*}
and
$$
D_{i, w}(\xi\eta)=D_{i, w}(\eta)=\emptyset D_{i, A_{i, \emptyset}(w)}(\eta)=D_{i,w}(\xi)D_{i, A_{i, \xi}(w)}(\eta).
$$
In the same way we show that (\ref{ee1}) and (\ref{ee2}) hold in case $w=\emptyset$. Now, let us assume that $|\xi|=|w|=1$. Then $\xi=q$ and $w=x_0$ for some $q\in Q$ and  $x_0\in X_i$. By definition, for any $v=x_1\ldots x_m\in X^*_{(i+1)}$ we have
$$
A_{i, \xi}(wv)=A_{i, q}(x_0x_1\ldots x_m)
=\psi_i(s_0, x_0)\psi_{i+1}(s_1, x_1)\ldots\psi_{i+m}(s_m, x_m),
$$
where $s_0=q$ and $s_{j+1}=\varphi_{i+j}(s_j, x_j)$ for $0\leq j<m$. Since $D_{i, w}(\xi)=D_{i, x_0}(s_0)=\varphi_i(s_0, x_0)=s_1$, we have
$$
A_{i+|w|, D_{i, w}(\xi)}(v)=A_{i+1, s_1}(x_1\ldots x_m)
=\psi_{i+1}(s_1, x_1)\ldots\psi_{i+m}(s_m, x_m).
$$
Now, since $A_{i, \xi}(w)=A_{i, s_0}(x_0)=\psi_i(s_0, x_0)$, we obtain (\ref{ee1}) in this case.
Similarly, for any $\eta=q_1\ldots q_n\in Q^*$ we have
$$
D_{i, w}(\xi\eta)=D_{i, x_0}(q_0q_1\ldots q_n)
=\varphi_i(q_0, y_0)\varphi_i(q_1, y_1)\ldots\varphi_i(q_n, y_n),
$$
where $q_0=q$, $y_0=x_0$ and $y_{j+1}=\psi_i(q_j, y_j)$ for $0\leq j<n$. Since $A_{i,\xi}(w)=A_{i, q_0}(x_0)=\psi_i(q_0, x_0)=y_1$, we have
$$
D_{i, A_{i,\xi}(w)}(\eta)=D_{i, y_1}(q_1\ldots q_n)=\varphi_i(q_1, y_1)\ldots\varphi_i(q_n, y_n).
$$
Now, since $D_{i, w}(\xi)=D_{i, x_0}(q_0)=\varphi_i(q_0, x_0)$, we obtain (\ref{ee2}) in this case.

Further, we prove the proposition by induction on the sum of lengths of words $w$ and $\xi$. By the above, (\ref{ee1}) and (\ref{ee2}) hold for all  $\xi\in Q^*$ and $w\in X^*_{(i)}$ with $|\xi|+|w|\leq 2$. Now, let us take $n>2$ and let us assume that  (\ref{ee1}) and (\ref{ee2}) hold for all $\xi\in Q^*$ and $w\in X^*_{(i)}$ with $|\xi|+|w|<n$. Let $\xi\in Q^*$ and $w\in X^*_{(i)}$ be both nonempty and such that $|\xi|+|w|<n$ and let $x\in X_{|w|+i}$ and $q\in Q$. We show  the following two equalities:
$$
A_{i, \xi q}(wv)=A_{i, \xi q}(w)A_{i+|w|, D_{i, w}(\xi q)}(v)
$$
and
$$
D_{i, wx}(\xi\eta)=D_{i, wx}(\xi)D_{i, A_{i, \xi}(wx)}(\eta).
$$
By the inductive assumption for the first one we have:
\begin{eqnarray}\label{ee3}
A_{i, \xi q}(wv)=A_{i, q}A_{i,\xi}(wv)=A_{i, q}(A_{i, \xi}(w)A_{i+|w|, D_{i, w}(\xi)}(v)).
\end{eqnarray}
Since $|A_{i, \xi}(w)|=|w|$ and  $|q|+|A_{i, \xi}(w)|=1+|w|\leq |\xi|+|w|<n$, by the inductive assumption the right side of (\ref{ee3}) is equal to
\begin{eqnarray*}
A_{i, q}(A_{i, \xi}(w)A_{i+|w|, D_{i, w}(\xi)}(v))=\\
=A_{i,q}(A_{i, \xi}(w))A_{i+|w|, D_{i, A_{i, \xi}(w)}(q)}(A_{i+|w|, D_{i, w}(\xi)}(v))=\\
=A_{i, \xi q}(w)A_{i+|w|, D_{i,w}(\xi)D_{i, A_{i, \xi}(w)}(q)}(v)=\\
=A_{i, \xi q}(w)A_{i+|w|, D_{i, w}(\xi q)}(v).
\end{eqnarray*}
Similarly, for the second one we have:
\begin{eqnarray}\label{ee4}
D_{i, wx}(\xi\eta)=D_{i+|w|, x}D_{i, w}(\xi\eta)=D_{i+|w|,x}(D_{i, w}(\xi)D_{i, A_{i, \xi}(w)}(\eta)).
\end{eqnarray}
Since $|D_{i, w}(\xi)|=|\xi|$ and $|x|+|D_{i, w}(\xi)|=1+|\xi|\leq |w|+|\xi|<n$, we obtain by the inductive assumption for the right side of (\ref{ee4}):
\begin{eqnarray*}
D_{i+|w|,x}(D_{i, w}(\xi)   D_{i, A_{i, \xi}(w)}(\eta))=\\
=D_{i+|w|,x}(D_{i,w}(\xi))  D_{i+|w|, A_{i+|w|, D_{i,w}(\xi)}(x)}(D_{i, A_{i, \xi}(w)}(\eta))=\\
=D_{i, wx}(\xi)D_{i, A_{i, \xi}(w)A_{i+|w|, D_{i,w}(\xi)}(x)}(\eta)=\\
=D_{i, wx}(\xi)D_{i, A_{i, \xi}(wx)}(\eta).
\end{eqnarray*}
The inductive argument finishes the proof.
\end{proof}

We denote the semigroup  generated by dual mappings   of the automaton $A$ by $DS(A)$:
$$
DS(A)=sgp\{D_{i, x}\colon i\in\no,\;x\in X_i\}.
$$
Obviously, every mapping $D_{i, w}$ ($i\in\no$, $w\in X^*_{(i)}$) belongs to  $DS(A)$.

\begin{defin}
An automaton $A=(X, Q, \varphi, \psi)$ is called state-invertible if for any $i\in\no$ and any letter $x\in X_i$ the mapping $Q\ni q\mapsto \varphi_i(q, x)\in Q$ is invertible.
\end{defin}

\begin{prop}
If the automaton $A$ is state-invertible, then the mappings $D_{i, w}$ ($i\in \no$,  $w\in X^*_{(i)}$) are invertible.
\end{prop}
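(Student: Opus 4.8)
The plan is to reduce the statement to the single-letter dual mappings and then invert those by exhibiting an explicit ``dual-inverse'' automaton. Since by definition $D_{i,w}=D_{i+m-1,x_m}\cdots D_{i,x_1}$ for $w=x_1x_2\ldots x_m$, and every factor $D_{i+j-1,x_j}$ is a self-map of the single set $Q^*$, it suffices to show that each $D_{i,x}$ with $i\in\no$ and $x\in X_i$ is a bijection of $Q^*$; a composition of bijections is then again a bijection. So from now on I would fix $i$ and $x$ and concentrate on $D_{i,x}$.

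To invert $D_{i,x}$ I would imitate the construction of the inverse automaton from the proof of Proposition~\ref{prop123}, but carried out in the dual direction. State-invertibility says precisely that for every $i$ and every $x\in X_i$ the map $\pi_{i,x}\colon Q\to Q$, $\pi_{i,x}(q)=\varphi_i(q,x)$, is a permutation of $Q$. Using its inverse I would define an automaton $A^\flat=(X,Q,\varphi^\flat,\psi^\flat)$ over the same changing alphabet by
\begin{equation*}
\varphi^\flat_i(q,x)=\pi_{i,x}^{-1}(q),\qquad \psi^\flat_i(q,x)=\psi_i\!\left(\pi_{i,x}^{-1}(q),x\right),
\end{equation*}
and claim that the dual mappings $D^\flat_{i,x}$ of $A^\flat$ are the inverses of the dual mappings $D_{i,x}$ of $A$. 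Note that $A^\flat$ is again state-invertible, since $\varphi^\flat_i(\cdot,x)=\pi_{i,x}^{-1}$ is a permutation of $Q$; this symmetry is what gives a two-sided inverse at no extra cost.

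The core of the argument is the verification that $D^\flat_{i,x}\circ D_{i,x}=Id_{Q^*}$. Tracking the recursion in the definition of the dual mapping, feeding $\xi=q_1q_2\ldots q_n$ through $D_{i,x}$ produces internal letters $x_1=x$, $x_{j+1}=\psi_i(q_j,x_j)$ together with the output states $q_j'=\pi_{i,x_j}(q_j)$. Feeding $q_1'q_2'\ldots q_n'$ back through $D^\flat_{i,x}$, I would prove by induction on $j$ that the internal letters $x_j$ are regenerated identically and that the $j$-th output state equals $\pi_{i,x_j}^{-1}(q_j')=q_j$, so that the original word $\xi$ is recovered. The composition $D_{i,x}\circ D^\flat_{i,x}=Id_{Q^*}$ follows by the symmetric computation (or by applying the same argument to the state-invertible automaton $A^\flat$, whose permutations are the $\pi_{i,x}^{-1}$). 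In the language of the dual graph $\Gamma$, this simply says that along a directed path in the component $\Gamma_i$ starting at the vertex $(i,x)$ the output entries determine the input entries uniquely, which is exactly the invertibility of $D_{i,x}$.

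The main point to be careful about --- and the reason I would not merely quote Proposition~\ref{prop123} --- is that the dual mappings act on $Q^*$, whereas $Q$ is allowed to be infinite. Hence the finiteness-of-levels shortcut used at the end of the proof of Proposition~\ref{prop123}, where a one-sided inverse is upgraded to a genuine inverse because each level is a finite set, is not available here. This is precisely why I would construct the honest two-sided inverse $D^\flat_{i,x}$ rather than settle for a single composition; the only real work, the two inductive verifications above, is routine bookkeeping once $A^\flat$ is set up correctly.
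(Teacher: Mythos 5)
Your proof is correct, but it takes a genuinely different route from the paper's. The paper also reduces to a single dual mapping $D_{i,x}$, but then argues only \emph{injectivity}: it locates the first position $j_0$ where two words $q_1\ldots q_n$ and $q'_1\ldots q'_n$ differ, checks that the internal letters $x_j$ agree up to that position, and uses the injectivity of $q\mapsto\varphi_i(q,x_{j_0})$ to conclude that the images differ at position $j_0$; invertibility is then deduced from the fact that $D_{i,x}$ preserves lengths. You instead build an explicit two-sided inverse $D^\flat_{i,x}$ as the dual mapping of an auxiliary automaton $A^\flat$ with $\varphi^\flat_i(q,x)=\pi_{i,x}^{-1}(q)$ and $\psi^\flat_i(q,x)=\psi_i(\pi_{i,x}^{-1}(q),x)$, and your two inductive verifications that the internal letters are regenerated and the states recovered do go through (I checked both compositions). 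The trade-off is real: the paper's argument is shorter, but its final step (length-preserving and injective implies invertible) tacitly uses that each level $Q^n$ is finite, which the proposition does not assume --- your observation on this point is accurate, and your construction proves the statement for arbitrary state sets $Q$ and moreover yields an explicit formula for $(D_{i,x})^{-1}$, nicely mirroring the construction of the inverse automaton in Proposition~\ref{prop123} with the roles of states and letters interchanged. For the automaton $B$ actually used later in the paper the state set has four elements, so both arguments suffice there.
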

\begin{proof}
We need only to show that for any $i\in\no$ and any $x\in X_i$ the dual mapping $D_{i, x}$ is invertible. Since this mapping preserves the lengths of words, we must only to show that $D_{i, x}$ is one-to-one. So, let $q_1\ldots q_n\in Q^*$, $q_1'\ldots q_n'\in Q^*$ and let $1\leq j_0\leq n$ be the smallest number such that $q_{j_0}\neq q'_{j_0}$. By definition of a dual mapping we have $D_{i, x}(q_1\ldots q_n)=s_1\ldots s_n$,
where $s_j=\varphi_i(q_j, x_j)$ for $1\leq j\leq n$ and the letters $x_j\in X_i$ are defined recursively: $x_1=x$ and $x_{j+1}=\psi_i(q_j, x_j)$ for $1\leq j\leq n$. Also $D_{i, x}(q'_1\ldots q'_n)=s'_1\ldots s'_n$, where $s'_j=\varphi_i(q'_j, x'_j)$ for $1\leq j\leq n$ and the letters $x'_j\in X_i$ are defined recursively: $x'_1=x$ and $x'_{j+1}=\psi_i(q'_j, x'_j)$ for $1\leq j\leq n$. Since $x'_1=x_1=x$ and $q_j=q'_j$ for $1\leq j<j_0$, we obtain $x_j=x'_j$ for $1\leq j\leq j_0$. In consequence $s'_j=s_j$ for $1\leq j<j_0$. Since $q'_{j_0}\neq q_{j_0}$ and the mapping $q\mapsto \varphi_i(q, x_{j_0})$ is invertible, we obtain
$$
s_{j_0}'=\varphi_i(q'_{j_0}, x'_{j_0})=\varphi_i(q'_{j_0}, x_{j_0})\neq \varphi_i(q_{j_0}, x_{j_0})=s_{j_0}
$$
In consequence $D_{i, x}$ is one-to-one.
\end{proof}

In particular, if the automaton $A$ is state-invertible, then the dual mappings of $A$  are all invertible. We denote the group generated by them  by $DG(A)$:
$$
DG(A)=gp\{D_{i, x}\colon i\in\no,\;x\in X_i\}.
$$

\section{The stabilization of an automaton}
Let $A=(X, Q, \varphi, \psi)$ be an automaton over the changing alphabet $X$ and let $f, g\in DS(A)$.   For every $n\in\no$ we say that $f$ and $g$  are $n$-equivalent and we write $f\sim_n g$ if the restrictions of $f$ and $g$ to the set $Q^n$ coincide, that is $f(\xi)=g(\xi)$ for every $\xi\in Q^n$. Since the mappings from the semigroup $DS(A)$ preserve the lengths of words,  the relation $\sim_n$ is an equivalence relation on this semigroup, and if $A$ is invertible, then $\sim_n$ is an equivalence relation also on the group $DG(A)$.

Let $S$ and $T$ be two sets  of transformations of the monoid $Q^*$. We say that $S$ and $T$ are $n$-equivalent and we write $S\sim_n T$ if the following  condition holds: for any $f\in S$ there is $g\in T$ such that $f\sim_n g$ and for any $f\in T$ there is $g\in S$ such that $f\sim_n g$.

\begin{defin}
The numbers $i, i'\in\no$ are said to be $n$-equivalent if
$$
\{D_{i, x}\colon x\in X_i\}\sim_n \{D_{i', x}\colon x\in X_{i'}\}.
$$
If the numbers $i, i'\in\no$ are $n$-equivalent, then we write $i\sim_n i'$.
\end{defin}

\begin{defin}
The automaton $A$ is called stabilized if for any $n\in\no$ there is $\lambda_n\in\no$ such that $i\sim_n i'$ for any $i, i'\geq \lambda_n$. The sequence $(\lambda_1, \lambda_2, \ldots)$ is called the stabilizing sequence of $A$.
\end{defin}

\begin{prop}\label{prop1}
If the automaton $A$ is stabilized with the stabilizing sequence $(\lambda_1, \lambda_2, \ldots)$, then
$\{D_{i, w}\colon w\in X^*_{(i)}\}\sim_n \{D_{\lambda_n, w}\colon w\in X^*_{(\lambda_n)}\}$
for any $n\in\no$ and $i\geq \lambda_n$.
\end{prop}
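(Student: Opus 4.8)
The plan is to reduce the set-level statement $\{D_{i, w}\colon w\in X^*_{(i)}\}\sim_n \{D_{\lambda_n, w}\colon w\in X^*_{(\lambda_n)}\}$ to the two defining inclusions of the relation $\sim_n$ between sets of transformations, and to verify each inclusion by induction on the word length $m=|w|$, using the factorization $D_{i, wx}=D_{i+|w|, x}\,D_{i, w}$ together with a compatibility of $\sim_n$ with composition. Fix $n\in\no$ and $i\geq\lambda_n$ throughout.

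First I would record the crucial observation that $n$-equivalence is preserved under composition: if $f_1\sim_n g_1$ and $f_2\sim_n g_2$ for maps in $DS(A)$, then $f_2f_1\sim_n g_2g_1$. Indeed, for $\xi\in Q^n$ we have $f_1(\xi)=g_1(\xi)$; since $f_1$ preserves lengths, this common value $\eta$ again lies in $Q^n$, whence $f_2(\eta)=g_2(\eta)$, and therefore $(f_2f_1)(\xi)=f_2(\eta)=g_2(\eta)=(g_2g_1)(\xi)$. This is the only place where the length-preserving property of dual mappings is used, and it is exactly what makes $\sim_n$ behave well under the recursion defining $D_{i, w}$.

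Next I would prove, by induction on $m=|w|$, that for every $w\in X^*_{(i)}$ there is $w'\in X^*_{(\lambda_n)}$ with $D_{i, w}\sim_n D_{\lambda_n, w'}$. For $m=0$ both maps equal $Id_{Q^*}$. For the inductive step, write $w=w_0x$ with $x\in X_{i+m-1}$, so that $D_{i, w}=D_{i+m-1, x}\,D_{i, w_0}$. The induction hypothesis supplies $w_0'\in X^*_{(\lambda_n)}$ of length $m-1$ with $D_{i, w_0}\sim_n D_{\lambda_n, w_0'}$. For the new factor I invoke stabilization: since $i+m-1\geq\lambda_n$ and $\lambda_n+m-1\geq\lambda_n$, these two indices are $n$-equivalent, i.e.\ $\{D_{i+m-1, y}\colon y\in X_{i+m-1}\}\sim_n\{D_{\lambda_n+m-1, y}\colon y\in X_{\lambda_n+m-1}\}$; hence there is $y\in X_{\lambda_n+m-1}$ with $D_{i+m-1, x}\sim_n D_{\lambda_n+m-1, y}$. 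Applying the composition compatibility from the previous paragraph yields
$$
D_{i, w}=D_{i+m-1, x}\,D_{i, w_0}\sim_n D_{\lambda_n+m-1, y}\,D_{\lambda_n, w_0'}=D_{\lambda_n, w_0'y},
$$
so $w'=w_0'y\in X^*_{(\lambda_n)}$ is the required word.

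The opposite inclusion is obtained by the same induction with the roles of $i$ and $\lambda_n$ interchanged; every index occurring in either word is $\geq\lambda_n$, so stabilization is applicable in both directions. Since both defining inclusions of the set-equivalence hold, the proposition follows. I do not expect a genuine obstacle here: all the content lies in the alignment of positions. Every single-letter factor appearing in $D_{i, w}$ sits at an index $\geq\lambda_n$, and its counterpart in $D_{\lambda_n, w'}$ sits at the correspondingly shifted index, which is likewise $\geq\lambda_n$, so stabilization applies to each factor separately, while the composition lemma assembles these factorwise matchings into a matching of the whole words. The one point demanding care is to keep the shifted indices $i+j-1$ and $\lambda_n+j-1$ synchronized so that stabilization, which only compares indices that are both at least $\lambda_n$, can be used at every step.
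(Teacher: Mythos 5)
Your proof is correct and follows essentially the same route as the paper: factor $D_{i,w}$ into single-letter dual mappings, match each factor $D_{i+j-1,x_j}$ with some $D_{\lambda_n+j-1,y_j}$ via stabilization (both indices being $\geq\lambda_n$), and reassemble using the compatibility of $\sim_n$ with composition. Your only addition is to make that compatibility (the congruence property, which the paper subsumes under ``$\sim_n$ is an equivalence relation on the semigroup $DS(A)$'') and the length bookkeeping explicit via induction on $|w|$, which is a harmless formalization of the same argument.
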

\begin{proof}
Let $n\in\no$ and $i\geq \lambda_n$. By definition, if $w=x_1\ldots x_m\in X^*_{(i)}$, then $D_{i, w}=D_{i+m-1, x_m}\ldots D_{i, x_1}$. But for $1\leq j\leq m$ we have $i+j-1\sim_n \lambda_n+j-1$ and thus $D_{i+j-1, x_j}\sim_n D_{\lambda_n+j-1, y_j}$ for some $y_j\in X_{i'+j-1}$. Since $\sim_n$ is an equivalence relation on the semigroup $DS(A)$, we have: $D_{i, w}\sim_n D_{\lambda_n, v}$, where $v=y_1\ldots y_m\in X^*_{(\lambda_n)}$. Similarly, we obtain that for every $w\in X^*_{(\lambda_n)}$ there is $v\in X^*_{(i)}$ such that $D_{\lambda_n,w}\sim_n D_{i, v}$.
\end{proof}

\begin{prop}\label{prop2}
If the automaton $A$ is stabilized with the stabilizing sequence $(\lambda_1, \lambda_2, \ldots)$, then
\begin{equation}\label{e11}
sgp\{D_{i, w}\colon i\geq \lambda_n,\; w\in X^*_{(i)}\}\sim_n \{D_{\lambda_n, w}\colon w\in X^*_{(\lambda_n)}\}
\end{equation}
for any $n\in\no$. Moreover, if $A$ is finite and state-invertible then
\begin{equation}\label{e22}
gp\{D_{i, w}\colon i\geq \lambda_n,\; w\in X^*_{(i)}\}\sim_n \{D_{\lambda_n, w}\colon w\in X^*_{(\lambda_n)}\}
\end{equation}
for any $n\in\no$.
\end{prop}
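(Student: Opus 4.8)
The plan is to treat the two parts in turn, using Proposition~\ref{prop1} together with the fact that $\sim_n$ is compatible with composition: if $f\sim_n f'$ and $g\sim_n g'$ in $DS(A)$, then, since every map in $DS(A)$ preserves lengths and hence maps $Q^n$ into $Q^n$, we get $(fg)(\xi)=f(g(\xi))=f'(g'(\xi))=(f'g')(\xi)$ for all $\xi\in Q^n$, so $fg\sim_n f'g'$. Write $T_n=\{D_{\lambda_n, w}\colon w\in X^*_{(\lambda_n)}\}$ for the right-hand set, and let $S_n$ and $G_n$ denote the semigroup and the group appearing in (\ref{e11}) and (\ref{e22}) respectively.

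For (\ref{e11}) one direction is immediate, since $T_n$ is contained in the generating set of $S_n$ (take $i=\lambda_n$), so each element of $T_n$ is $n$-equivalent to itself as an element of $S_n$. For the reverse direction the key step is a closure property up to $\sim_n$: I would show that for any $v_1, v_2\in X^*_{(\lambda_n)}$ there is $v\in X^*_{(\lambda_n)}$ with $D_{\lambda_n, v_2}D_{\lambda_n, v_1}\sim_n D_{\lambda_n, v}$. To obtain this I shift the starting index of the second factor: writing $v_2=x_1\ldots x_m$ and using that $\lambda_n+j-1\sim_n \lambda_n+|v_1|+j-1$ for every $j$ (both indices are $\geq\lambda_n$), each $D_{\lambda_n+j-1, x_j}$ is $n$-equivalent to some $D_{\lambda_n+|v_1|+j-1, y_j}$, so by compatibility $D_{\lambda_n, v_2}\sim_n D_{\lambda_n+|v_1|, v_2'}$ with $v_2'=y_1\ldots y_m$. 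The concatenation identity $D_{\lambda_n, v_1 v_2'}=D_{\lambda_n+|v_1|, v_2'}D_{\lambda_n, v_1}$ then gives $D_{\lambda_n, v_2}D_{\lambda_n, v_1}\sim_n D_{\lambda_n, v_1 v_2'}$, so $v=v_1v_2'$ works. Finally, an arbitrary element of $S_n$ is a product $D_{i_k, w_k}\cdots D_{i_1, w_1}$ with all $i_l\geq\lambda_n$; by Proposition~\ref{prop1} each factor is $n$-equivalent to some $D_{\lambda_n, u_l}\in T_n$, so by compatibility the whole product is $n$-equivalent to $D_{\lambda_n, u_k}\cdots D_{\lambda_n, u_1}$, and induction on $k$ using the closure property collapses this to a single $D_{\lambda_n, u}\in T_n$. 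This proves (\ref{e11}).

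For (\ref{e22}) I would pass to a finite symmetric group. Since $A$ is finite, $Q^n$ is finite, and since $A$ is state-invertible the maps $D_{i,w}$ are invertible and length-preserving, so restriction to $Q^n$ sends each of them to a permutation of $Q^n$; write $\rho_n$ for this restriction map, which by the compatibility above is a homomorphism into $\mathrm{Sym}(Q^n)$. Let $R_n=\rho_n(S_n)$; by (\ref{e11}) we have $R_n=\rho_n(T_n)$, and $R_n$ is a nonempty finite subsemigroup of the finite group $\mathrm{Sym}(Q^n)$. The decisive observation is that a finite subsemigroup of a group is automatically a subgroup: for $r\in R_n$ some positive power $r^{l-k}$ equals the identity, whence the identity and $r^{-1}$ are themselves positive powers of $r$ and so lie in $R_n$. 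Thus $R_n$ is a group. Now $\rho_n(G_n)$ is generated by the elements $\rho_n(D_{i,w})$ and their inverses; all of these lie in the group $R_n$, so $\rho_n(G_n)\subseteq R_n$, while $R_n=\rho_n(S_n)\subseteq \rho_n(G_n)$ since $S_n\subseteq G_n$. Hence $\rho_n(G_n)=R_n=\rho_n(T_n)$, which is precisely the assertion $G_n\sim_n T_n$ of (\ref{e22}).

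The main obstacle is the reverse inclusion in (\ref{e11}): it is simply false that $T_n$ is literally closed under composition, so the argument must manufacture closure only up to $\sim_n$, and this is exactly where re-indexing via the stabilization hypothesis (all indices $\geq\lambda_n$ are mutually $n$-equivalent) has to be combined with the concatenation identity and the compatibility of $\sim_n$ with composition. Once (\ref{e11}) is established, (\ref{e22}) is comparatively soft, resting entirely on finiteness and the elementary fact that a finite subsemigroup of a group is a subgroup, which is what lets the inverses appearing in $G_n$ be absorbed without extra work.
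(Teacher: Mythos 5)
Your proposal is correct and follows essentially the same route as the paper: part (\ref{e11}) rests on the same re-indexing trick (all indices $\geq\lambda_n$ are mutually $n$-equivalent, combined with the concatenation identity and the compatibility of $\sim_n$ with composition), and part (\ref{e22}) rests on the same observation that the restriction of each $D_{i,x}$ to the finite set $Q^n$ is a permutation whose inverse is a positive power of itself. The only difference is cosmetic: the paper applies the finite-order argument generator by generator, where you package it as ``a finite subsemigroup of $\mathrm{Sym}(Q^n)$ is a subgroup.''
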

\begin{proof}
Let $n\in\no$. By definition, every mapping $D_{i, w}$ with $i\geq \lambda_n$ and $w\in X^*_{(i)}$ is the composition of  dual mappings of the form $D_{i_j, x_j}$ for some $i_j\geq \lambda_n$ and $x_j\in X_{i_j}$. Thus every element $h$ from the semigroup on the left side of (\ref{e11}) is equal to the composition
$D_{i_1, x_1}D_{i_2, x_2}\ldots D_{i_m, x_m}$, where $i_j\geq \lambda_n$ and $x_j\in X_{i_j}$ for $1\leq j\leq m$. From the stabilization of  $A$ we obtain $i_j\sim_n \lambda_n+m-j$ for $1\leq j\leq m$. Thus $D_{i_j, x_j}\sim_n D_{\lambda_n+m-j, y_j}$ for some $y_j\in X_{\lambda_n+m-j}$. Since $\sim_n$ is an equivalence relation on the semigroup $DS(A)$, we have
$$
h\sim_n D_{\lambda_n+m-1, y_1}D_{\lambda_n+m-2, y_2}\ldots D_{\lambda_n, y_m}=D_{\lambda_n, w},
$$
where $w=y_my_{m-1}\ldots y_1\in X^*_{(\lambda_n)}$. On the other hand, every element of the form $D_{\lambda_n, w}$ belongs to the semigroup on the left side of (\ref{e11}). This implies the equivalence (\ref{e11}).

By the above, to prove (\ref{e22}) we need only to show that for any $i\geq \lambda_n$ and any letter $x\in X_i$ the inverse mapping $(D_{i, x})^{-1}$ is $n$-equivalent to the mapping $D_{\lambda_n, w}$ for some $w\in X^*_{(\lambda_n)}$. Since the dual mapping $D_{i, x}$ is invertible, the restriction of $D_{i, x}$ to the set $Q^n$ defines a permutation of this set. Since the automaton $A$ is finite, the set $Q^n$ is finite. In consequence, there is $p\in\no$ such that the restriction of the power  $D_{i, x}^p$ to the set $Q^n$ is an identity permutation of this set, that is $D_{i, x}^p\sim_n Id_{Q^*}$.  Since $\sim_n$ is an equivalence relation on the group $DG(A)$, we obtain $D_{i, x}^{-1}\sim_n D_{i, x}^{p-1}$. Thus from the equivalence (\ref{e11}) we obtain $D_{i, x}^{-1}\sim_n D_{i, x}^{p-1}\sim_n D_{\lambda_n, w}$    for some $w\in X^*_{(\lambda_n)}$. The claim follows.
\end{proof}

\section{The automaton generating a free group and the automaton $B$}\label{sec1}
Let $A=(X, Q, \varphi^A, \psi^A)$ be the  automaton over a changing alphabet defined by conditions (a)-(d) from the  introduction. Figure~1 shows the graph of the automaton $A$.
\begin{figure}[hbtp]
\centering
\includegraphics[width=9cm]{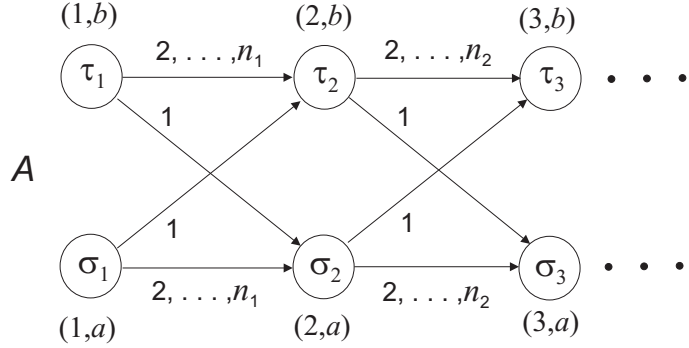}
\caption{The graph of the automaton $A$}
\label{fig11}
\end{figure}
For the state functions $\sigma_{i, q}\colon X_i\to X_i$ of $A$ we have: $\sigma_{i,a}=\sigma_i$ and $\sigma_{i, b}=\tau_i$. In particular the automaton  $A$ is invertible.
Let $I$ be the inverse automaton of $A$ and let $I_{-}$ be the automaton obtained from $I$ by renaming the states $a, b$ to $a^{-1}$ and $b^{-1}$ respectively. The symbols $a$, $b$, $a^{-1}$, $b^{-1}$ are regarded as elements of a free group with the basis $Q=\{a, b\}$.

Let us consider the union $B=A\cup I_{-}$
of automata $A$ and $I_{-}$. Then $B$ is an automaton over the same changing alphabet $X=(X_1, X_2, \ldots)$ and with the 4-element set $Q_{\pm}=\{a, b, a^{-1}, b^{-1}\}$ of internal states. Figure~2 shows the graph of the automaton $B$.
\begin{figure}[hbtp]
\centering
\includegraphics[width=9cm]{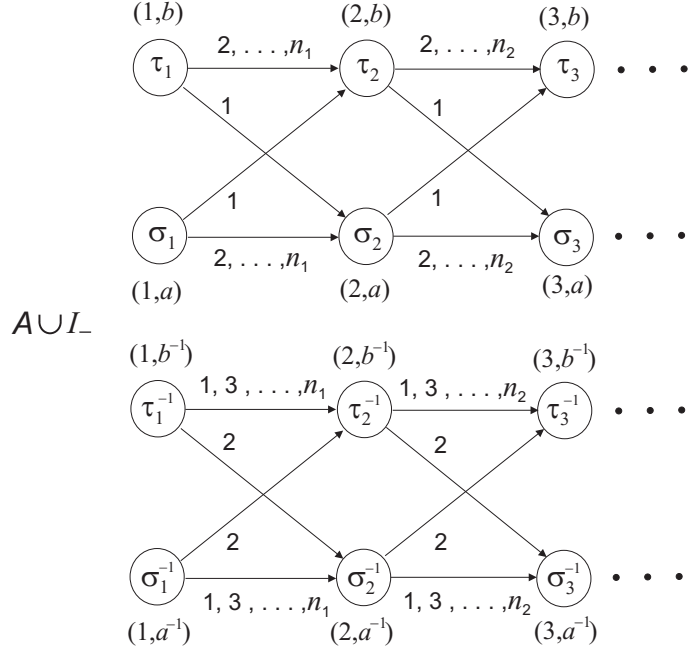}
\caption{The graph of the automaton $B=A\cup I_{-}$}
\label{fig9}
\end{figure}
By Proposition~\ref{prop123}, the automaton functions $B_{i, q}$ ($i\in\no$, $q\in Q_\pm$) satisfy:
\begin{equation}\label{e123}
B_{i, a}=A_{i,a},\;B_{i, b}=A_{i, b},\;B_{i, a^{-1}}=A_{i, a}^{-1},\;B_{i, b^{-1}}=A_{i, b}^{-1}.
\end{equation}
In consequence $B^{-1}_{i, \xi}=B_{i, \xi^{-1}}$ for every $\xi\in Q^*_{\pm}$. Also we have $G(A)=S(B)$.

The transition functions $\varphi_i\colon Q_\pm\times X_i\to Q_\pm$ of the automaton $B$ are described by formulas
\begin{equation}\label{e4}
\varphi_i(q, x)=\left\{
\begin{array}{ll}
q, & x\neq 1, 2,\\
a,& x=1,\;q=b,\\
b, &x=1, \;q=a,\\
a^{-1},& x=1,\;q=a^{-1},\\
b^{-1}, &x=1, \;q=b^{-1},\\
a, &x=2, \;q=a,\\
b, &x=2,\;q=b,\\
a^{-1}, &x=2, \;q=b^{-1},\\
b^{-1}, &x=2,\;q=a^{-1},\\
\end{array}
\right.
\end{equation}
For the output functions $\psi_i\colon Q_\pm\times X_i\to X_i$ of the automaton $B$ we have:
\begin{equation}\label{e5}
\psi_i(q, x)=\left\{
\begin{array}{ll}
\pi_{i,1}(x),&q=a,\\
\pi_{i, 2}(x),&q=a^{-1},\\
\pi_{i,3}(x),&q\in\{b, b^{-1}\},
\end{array}
\right.
\end{equation}
where $\pi_{i, 1}=\sigma_i$, $\pi_{i, 2}=\sigma_i^{-1}$ and $\pi_{i, 3}=\tau_i$.
From formula (\ref{e4}) we see that the automaton $B$ is state-invertible. Consequently, the dual mappings $D_{i, x}$ of this automaton and the mappings $D_{i, w}$ ($i\in\no$, $w\in X^*_{(i)}$) are all invertible. The component $\Gamma_i$ of the dual graph of the automaton $B$, which describes the action of  dual mappings $D_{i, x}$ ($x\in X_i$),  is presented in Figure~3.
\begin{figure}[hbtp]
\centering
\includegraphics[width=9cm]{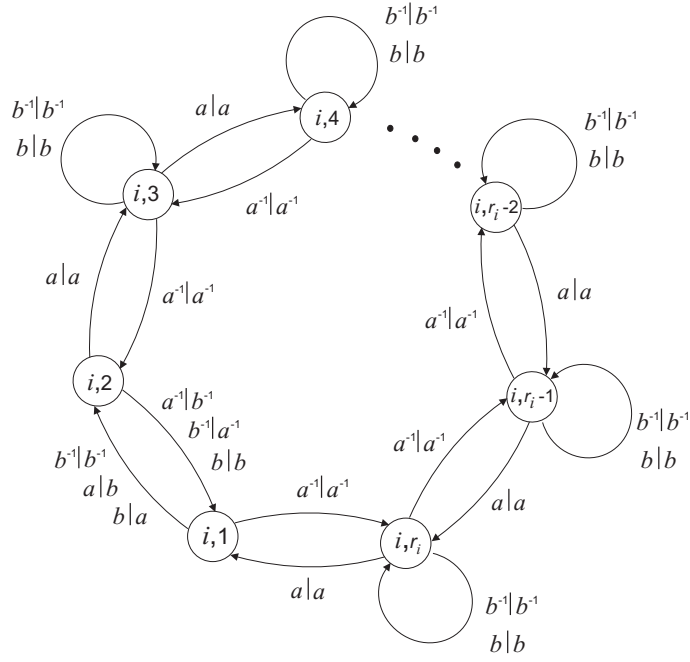}
\caption{The component $\Gamma_i$ of the dual graph of the automaton $B$}
\label{fig10}
\end{figure}

Let $\lambda_n$ ($n\in\no$)  be the smallest number such that $r_{\lambda_n}>2n$. Since the sequence $(r_1, r_2, \ldots)$ is unbounded, such a number exists. Obviously, the sequence $(\lambda_1, \lambda_2, \ldots)$ is nondecreasing.

\begin{prop}\label{prep}
The automaton $B$ is stabilized with the stabilizing sequence $(\lambda_1, \lambda_2, \ldots)$.
\end{prop}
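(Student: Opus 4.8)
The plan is to read the dynamics of the dual mappings $D_{i,x}$ directly off the component $\Gamma_i$ of the dual graph and to show that, as soon as $r_i>2n$, the restriction of each $D_{i,x}$ to $Q_\pm^n$ is governed only by the position of the letter $x$ relative to the two ``active'' letters $1$ and $2$, independently of the actual size $r_i$. Throughout I identify $X_i=\{1,2,\ldots,r_i\}$ with the cyclic group $\mathbb{Z}/r_i$ on which $\sigma_i$ acts as the shift $x\mapsto x+1$; the transposition $\tau_i=(1,2)$ then just swaps the two cyclically adjacent letters $1$ and $2$.

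First I would record the two local features of the computation of $D_{i,x}(q_1\ldots q_n)$. Recall from (\ref{e3}) that this computation reads the states $q_1,\ldots,q_n$ while carrying a letter $x_1=x,x_2,\ldots$ with $x_{j+1}=\psi_i(q_j,x_j)$, and outputs the states $\varphi_i(q_j,x_j)$. By (\ref{e5}) the carry is transformed at each step by one of $\sigma_i,\sigma_i^{-1},\tau_i$, each of which moves a letter by cyclic distance at most $1$; hence the carries $x_1,\ldots,x_n$ occupy a cyclic arc of at most $n$ consecutive letters around $x$. By (\ref{e4}) the output state equals the input state unless the current carry lies in $\{1,2\}$. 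Consequently, if the cyclic distance from $x$ to $\{1,2\}$ is at least $n$, the carry never meets $\{1,2\}$ and $D_{i,x}$ restricts to the identity on $Q_\pm^n$.

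This isolates the $2n$ \emph{near} letters $\{1,\ldots,n+1\}\cup\{r_i-n+2,\ldots,r_i\}$ as the only ones whose dual mapping can act nontrivially on $Q_\pm^n$; every other (\emph{far}) letter gives the identity, and such letters exist because $r_i-2n>0$. For two indices $i,i'\ge\lambda_n$ (so $r_i,r_{i'}>2n$, the sequence $(r_i)$ being nondecreasing) I would introduce the coordinate $\delta$ = signed cyclic distance from the letter $1$, so that $\delta(1)=0$, $\delta(2)=1,\ldots,\delta(n+1)=n$ and $\delta(r_i)=-1,\ldots,\delta(r_i-n+2)=-(n-1)$, and match near letters by the bijection $\theta$ preserving $\delta$ (fixing $1,\ldots,n+1$ and sending $r_i-j\mapsto r_{i'}-j$ for $0\le j\le n-2$), while far letters of $X_i$ and $X_{i'}$ all match each other through the identity map. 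The point is that in the coordinate $\delta$ the three carry transformations act as $\delta\mapsto\delta\pm1$ and as the swap of $\delta=0$ with $\delta=1$, while the state is altered exactly at $\delta\in\{0,1\}$, with a rule depending only on which of these two values occurs --- a description that no longer mentions $r_i$. Since every carry trajectory stays inside an arc of fewer than $r_i$ letters, this coordinate is unambiguous, so matched letters produce identical sequences of output states. Thus $D_{i,x}\sim_n D_{i',\theta(x)}$ for every $x$, which yields $\{D_{i,x}\colon x\in X_i\}\sim_n\{D_{i',x}\colon x\in X_{i'}\}$, that is $i\sim_n i'$, for all $i,i'\ge\lambda_n$, as required by the definition of a stabilized automaton.

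The two bookkeeping observations of the second paragraph are routine; the step I expect to demand the most care is the matching of the near letters, where one must verify that a carry trajectory which wanders out of the near region and returns does so consistently in both $X_i$ and $X_{i'}$. This is exactly what the bound $r_i>2n$ secures: it keeps every visited arc shorter than the alphabet and prevents the two sides of the active region (reached through the wrap $r_i\leftrightarrow 1$) from colliding, so that ``signed distance from $1$'' remains a faithful, size-independent description of the whole computation.
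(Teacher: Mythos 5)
Your argument is correct and follows essentially the same route as the paper's: the paper likewise splits $X_i$ into the $2n$ ``near'' letters $\widetilde{X}_i=\{x\le n+1\}\cup\{x>r_i-n+1\}$ (showing every other letter yields the identity on $Q_\pm^n$ because the carry moves by at most one per step) and matches near letters of $X_i$ and $X_{i'}$ by the bijection $x\mapsto x'$ that is exactly your ``signed distance from $1$'' correspondence, checking that matched carry trajectories meet $\{1,2\}$ at the same steps with the same values. The only place the paper is more explicit than you is the delicate point you yourself flag: it introduces the first exit time $j_0$ of the carry from $\widetilde{X}_i$ and verifies separately that after that exit both trajectories avoid $\{1,2\}$, which is the precise content of your remark that the cumulative displacement stays of absolute value less than $r_i$.
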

\begin{proof}
Let $n\in\no$. Since the sequence $(r_1, r_2, \ldots)$ is nondecreasing, we have $r_i>2n$ for every $i\geq \lambda_n$. For any $i\geq \lambda_n$ let us define the subset $\widetilde{X}_i\subseteq X_i$ as follows
$$
\widetilde{X}_i=\{x\in X_i\colon x\leq n+1\}\cup\{x\in X_i\colon x>r_i-n+1\}.
$$
If $x\in X_i\setminus \widetilde{X}_i$, then $n+2\leq x\leq r_i-n+1$
and thus $|x-\pi_{i, \epsilon}(x)|\leq 1$ for any $\epsilon\in\{1,2,3\}$. Consequently, by easy inductive argument, for any sequence $(x_1, x_2, \ldots, x_n)$ in which $x_1\in X_i\setminus \widetilde{X}_i$, $x_{j+1}=\pi_{i, \epsilon_j}(x_j)$, $\epsilon_j\in\{1,2,3\}$ ($1\leq j\leq n$) we obtain: $n+3-j\leq x_j\leq r_i-n+j$.
In particular, for $1\leq j\leq n$ we have: $x_j\in X_i\setminus\{1,2\}$. Hence,  by formula~(\ref{e4}) we obtain  $\varphi_i(q, x_j)=q$ for  $q\in Q_\pm$ and $1\leq j\leq n$. Let $i\geq \lambda_n$ and $x\in X_i$. Then for every $\xi=q_1\ldots q_n\in Q_{\pm}^n$ we have
$
D_{i, x}(\xi)=\varphi_i(q_1, x_1)\ldots\varphi_i(q_n, x_n),
$
where $x_1=x$, $x_{j+1}=\pi_{i, \epsilon_j}(x_j)$, $\epsilon_j\in\{1,2,3\}$ ($1\leq j< n$). By the above, if $x\in X_i\setminus\widetilde{X}_i$, then $D_{i, x}(\xi)=\xi$ for every $\xi\in Q_{\pm}^n$. In particular, for every $i, i'\geq \lambda_n$, $x\in X_i\setminus\widetilde{X}_i$ and $y\in X_{i'}\setminus\widetilde{X}_{i'}$ we have:
$
D_{i, x}\sim_n D_{i', y}\sim_n Id_{Q^*_\pm}.
$
For  $i\geq \lambda_n$ and $\epsilon\in\{1,2\}$ let us define a letter $t_{i, \epsilon}\in X_i$ as follows
$$
t_{i,\epsilon}=\left\{
\begin{array}{ll}
r_i-n+1,&\epsilon =1,\\
n+2,& \epsilon=2.
\end{array}
\right.
$$
Then we easily verify that $
\widetilde{X}_i=\{\pi_{i, \epsilon}(t_{i, \epsilon}), \pi_{i, \epsilon}^2(t_{i, \epsilon}), \ldots, \pi_{i, \epsilon}^{2n}(t_{i, \epsilon})\}$.
Let us fix $i, i'\geq \lambda_n$. The mapping $\widetilde{X}_i\ni x\mapsto x'\in \widetilde{X}_{i'}$
defined as follows
$$
x'=\left\{
\begin{array}{ll}
x,&x\leq n+1,\\
r_{i'}-r_i+x, & x>r_i-n+1.
\end{array}
\right.
$$
is a bijection, for which we have:  if $x=r_i-n+j$ for some $1<j\leq n$ then $x'=r_{i'}-n+j$. This implies that if $x=\pi_{i, \epsilon}^k(t_{i, \epsilon})$ for some $\epsilon\in\{1,2\}$ and $1\leq k\leq 2n$,  then $x'=\pi_{i', \epsilon}^k(t_{i', \epsilon})$. From this observation for  $\epsilon\in \{1,2\}$ and $x\in \widetilde{X}_i$ we obtain the following two conditions: (i)  $\pi_{i, \epsilon}(x)\in \widetilde{X}_i$ if and only if $\pi_{i',\epsilon}(x')\in \widetilde{X}_{i'}$, (ii) if $\pi_{i, \epsilon}(x)\in \widetilde{X}_i$, then $(\pi_{i,\epsilon}(x))'=\pi_{i',\epsilon}(x')$. We easily check  (i) and (ii)  also for $\epsilon=3$. Beside that,  directly from formula~(\ref{e4}) we obtain: (iii) $\varphi_i(q, x)=\varphi_{i'}(q, x')$ for every $x\in\widetilde{X}_i$ and $q\in Q_{\pm}$. Now, let $x\in\widetilde{X}_i$. For every $\xi=q_1\ldots q_n\in Q_\pm^n$ there are $\epsilon_j\in\{1,2,3\}$, $1\leq j< n$ such that: $D_{i,x}(\xi)=\varphi_i(q_1, x_1)\ldots\varphi_i(q_n, x_n)$,
where $x_1=x$ and $x_{j+1}=\pi_{i, \epsilon_j}(x_j)$ for $1\leq j<n$. Similarly: $D_{i', x'}(\xi)=\varphi_{i'}(q_1, z_1)\ldots\varphi_{i'}(q_n, z_n)$, where $z_1=x'$ and $z_{j+1}=\pi_{i', \epsilon_j}(z_j)$ for $1\leq j<n$.
Let $1\leq j_0\leq n$ be the maximum number such that $x_{j}\in \widetilde{X}_i$ for $1\leq j\leq j_0$.
From (i)-(iii) we obtain: $z_j=x_j'$ and $\varphi_{i'}(q_j, z_j)=\varphi_i(q_j, x_j)$ for $1\leq j\leq  j_0$. Moreover, for $j_0<j\leq n$ we have: $x_j\in X_i\setminus\{1,2\}$ and $z_j\in X_{i'}\setminus\{1,2\}$. In particular $\varphi_i(q_j, x_j)=\varphi_{i'}(q_j, z_j)=q_j$ for $j_0<j\leq n$. As a result we obtain $D_{i, x}(\xi)=D_{i', x'}(\xi)$. Thus $D_{i, x}\sim_n D_{i', x'}$ for any $x\in \widetilde{X_i}$.   In consequence the sets $\{D_{i, x}\colon x\in X_i\}$ and $\{D_{i', x}\colon x\in X_{i'}\}$ are $n$-equivalent, that is $i\sim_n i'$ for any $i, i'\geq \lambda_n$.
\end{proof}

\section{The action of dual mappings}

Let us consider the set $\{\ast, \ast^{-1}\}$ of two symbols: $\ast$ and $\ast^{-1}$. Every word over $\{*, *^{-1}\}$ (that is every element of a free monoid generated by $\{*, *^{-1}\}$) is called a pattern. From every word $\xi\in Q^*_{\pm}$ we may obtain a pattern $V$ by substituting $\ast$ for each occurrence of letters $a, b$ and substituting $\ast^{-1}$ for each occurrence of letters $a^{-1}, b^{-1}$. Then we say that $V$ is the pattern of $\xi$ or that $\xi$ follows the pattern $V$. We say that a word $\xi=q_1q_2\ldots q_n\in Q_{\pm}^*$ is freely irreducible if none of its two-letter subwords $q_1q_2, q_2q_3, \ldots, q_{n-1}q_n$ coincides with one of the following words: $aa^{-1}$, $a^{-1}a$, $bb^{-1}$, $b^{-1}b$.

\begin{lem}\label{lem1}
For every $i\in\no$,  $w\in X^*_{(i)}$ and  $\xi\in Q^*_\pm$ the words $D_{i, w}(\xi)$ and $\xi$ follow the same pattern, and $D_{i, w}(\xi)$ is freely irreducible if and only if $\xi$ is freely irreducible.
\end{lem}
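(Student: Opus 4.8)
The plan is to reduce the entire statement to a single-letter claim and then invoke the structural identity~(\ref{ee2}) together with induction. Since $D_{i,w} = D_{i+m-1,x_m}\cdots D_{i,x_1}$ is a composition of single-letter dual mappings, it suffices to prove the lemma for $w = x$ a single letter in $X_i$; the general case then follows by iterating, because a composition of maps that preserve patterns and preserve free irreducibility again has both properties. So the core of the argument is to analyze $D_{i,x}$ for $i\in\no$ and $x\in X_i$.

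For the single-letter case I would work directly from the dual graph component $\Gamma_i$ of the automaton $B$ (Figure~3) and from the transition formula~(\ref{e4}). The key observation is that the output entry $\varphi_i(q,x_j)$ attached to each input state $q$ always lies in the same \emph{sign class} as $q$: inspecting~(\ref{e4}) case by case, whenever the input state is $a$ or $b$ the output is again in $\{a,b\}$, and whenever the input is $a^{-1}$ or $b^{-1}$ the output is again in $\{a^{-1}, b^{-1}\}$. Writing $D_{i,x}(q_1\ldots q_n) = \varphi_i(q_1,x_1)\ldots\varphi_i(q_n,x_n)$ with $x_1 = x$ and $x_{j+1} = \psi_i(q_j,x_j)$ as in~(\ref{e3}), this means the $j$-th output letter has the same sign ($\ast$ versus $\ast^{-1}$) as $q_j$, so $D_{i,x}(\xi)$ and $\xi$ follow the same pattern. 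This handles the pattern claim for a single letter, and hence for all $w$.

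For free irreducibility I would again first settle the single-letter case. Here I want to track what happens to a two-letter subword $q_j q_{j+1}$. Using the recursion $x_{j+1} = \psi_i(q_j, x_j)$, the image pair is $\varphi_i(q_j, x_j)\,\varphi_i(q_{j+1}, x_{j+1})$, and the essential point is to check from~(\ref{e4})--(\ref{e5}) that the map $q_j q_{j+1} \mapsto \varphi_i(q_j,x_j)\varphi_i(q_{j+1},x_{j+1})$ sends a cancelling pair (one of $aa^{-1}, a^{-1}a, bb^{-1}, b^{-1}b$) to a cancelling pair and a non-cancelling pair to a non-cancelling pair. Since $D_{i,x}$ is invertible (the automaton $B$ is state-invertible), it suffices to verify one direction, say that $\xi$ freely irreducible implies $D_{i,x}(\xi)$ freely irreducible, and the converse follows by applying this to $D_{i,x}^{-1}$, which is itself a dual mapping of the inverse automaton and enjoys the same structural form. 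The general $w$ then follows by composing.

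\textbf{Main obstacle.} The routine part is the pattern claim, which is immediate from sign preservation. The delicate part is the free-irreducibility step, because preserving the sign of each individual letter is \emph{not} enough to preserve cancellation: a pair like $a a^{-1}$ and a pair like $b b^{-1}$ both read as $\ast\ast^{-1}$ in pattern terms, yet I must guarantee that genuine inverse pairs are tracked correctly. The crux is therefore the case analysis showing that, for the particular transition function~(\ref{e4}), two adjacent states get mapped to an inverse pair exactly when they started as one. I expect this to hinge on the fact that the \emph{two} letters $1$ and $2$ are the only ones that act nontrivially on the states, and that the $x=1$ and $x=2$ rows of~(\ref{e4}) permute $\{a,b\}$ and $\{a^{-1},b^{-1}\}$ consistently, so that the within-sign-class action is by a permutation of $\{a,b\}$ (respectively $\{a^{-1},b^{-1}\}$) that, combined with the letter transition $x_{j+1} = \psi_i(q_j,x_j)$, never creates or destroys a cancelling adjacency. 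Verifying this coherence across the interaction between the transition on states and the simultaneous evolution of the letter $x_j$ is where the real content lies.
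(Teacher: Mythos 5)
Your proposal follows essentially the same route as the paper: reduce to a single-letter dual mapping, get the pattern claim from the fact that $\varphi_i(q,x)$ stays in the sign class of $q$, and get free irreducibility by tracking adjacent pairs $q_jq_{j+1}$ through the letter recursion $x_{j+1}=\psi_i(q_j,x_j)$ and checking that cancelling pairs go to cancelling pairs (the paper phrases this as $D_{i,x}$ preserving each of the sets $\{aa^{-1},bb^{-1}\}$, $\{a^{-1}a,b^{-1}b\}$, $\{ab^{-1},ba^{-1}\}$, $\{a^{-1}b,b^{-1}a\}$, which yields both directions of the equivalence at once). The finite case check you defer is exactly the one the paper also treats as a direct inspection of formula~(\ref{e4}), so there is no gap; the only imprecision is your remark that $D_{i,x}^{-1}$ is a dual mapping of the inverse automaton (the inverse automaton inverts the automaton functions, not the dual mappings), but this detour is unnecessary since the same case analysis already shows non-cancelling pairs map to non-cancelling pairs.
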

\begin{proof}
By Proposition~\ref{p1} for any dual mapping $D_{i, x}$  and any $\xi_1, \xi_2, \xi_3\in Q^*_\pm$, we have
$D_{i, x}(\xi_1\xi_2\xi_3)=D_{i, x}(\xi_1)D_{i, y}(\xi_2)D_{i, z}(\xi_3)$ for some $y, z\in X_i$. Since the automaton $B$ is a disjoint union of automata with sets of states $\{a, b\}$ and $\{a^{-1}, b^{-1}\}$ respectively, we obtain that
$D_{i, x}$ preserves both the set $\{a, b\}$ and the set $\{a^{-1}, b^{-1}\}$. In consequence, for every $\xi\in Q_{\pm}^*$ the words $\xi$ and $D_{i, x}(\xi)$ follow the same pattern. Directly by formula (\ref{e4}) or by the component $\Gamma_i$ of the dual graph of the automaton $B$ we see that  $D_{i, x}$ preserves the following sets: $\{a^{-1}b, b^{-1}a\}$, $\{ab^{-1}, ba^{-1}\}$, $\{aa^{-1}, bb^{-1}\}$ and $\{a^{-1}a, b^{-1}b\}$. As a result we obtain that $D_{i, x}(\xi)$ is freely irreducible if and only if $\xi$ is freely irreducible. Now, the claim follows from the fact that $D_{i, w}$ is a composition of dual mappings.
\end{proof}

Every nonempty pattern $V$ has a unique decomposition $V=V_IV_{II}$ satisfying the following two conditions:
\begin{itemize}
\item[(i)] if $V_{I}$ is nonempty, then the last letter of $V_I$ coincides with the first letter of $V_{II}$,
\item[(ii)] $V_{II}=(\ast\ast^{-1})^l\ast^{r}$ or $V_{II}=(\ast^{-1}\ast)^l\ast^{-r}$ for some $l\geq 0$, $r\in\{0,1\}$ and $(l, r)\neq (0,0)$. In particular $V_{II}$ is nonempty.
\end{itemize}
We call $V_I$ and $V_{II}$ respectively the first part and the second part of $V$.

Similarly, every nonempty word $\xi\in Q^*_{\pm}$ has a unique decomposition $\xi=\xi_{I}\xi_{II}$ into the first part $\xi_I$ and the second part $\xi_{II}$ described as follows: if $V$ is the pattern of $\xi$ and $V=V_{I}V_{II}$ is the corresponding decomposition of $V$, then $V_I$ is the pattern of $\xi_I$ and $V_{II}$ is the pattern of $\xi_{II}$. In particular, if $\xi$ is nonempty, then its second part $\xi_{II}$ is also nonempty.

For any $\xi\in Q^*_{\pm}$ we denote by $\widetilde{\xi}\in Q^*_\pm$  the word arising from $\xi$ by substitution of every $a$ for $b$, every $b$ for $a$, every $a^{-1}$ for $b^{-1}$ and every $b^{-1}$ for $a^{-1}$. Obviously $\xi$ and $\widetilde{\xi}$ follow the same pattern and $\xi$ is freely irreducible if and only if $\widetilde{\xi}$ is freely irreducible.

\begin{lem}\label{lem2}
If a word $\xi=\xi_I\xi_{II}\in Q^*_\pm$ is freely irreducible, then $D_{i, w}(\xi)=D_{i,w}(\xi_I)\xi_{II}$ or $D_{i,w}(\xi)=D_{i, w}(\xi_I)\widetilde{\xi_{II}}$ for any $i\in\no$ and any $w\in X^*_{(i)}$.
\end{lem}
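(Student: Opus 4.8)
The plan is to peel off the second part with the product formula from Proposition~\ref{p1} and then read the result off from Lemma~\ref{lem1}. Applying the factorization (\ref{ee2}) with $\xi\mapsto\xi_I$ and $\eta\mapsto\xi_{II}$ gives
$$
D_{i, w}(\xi)=D_{i, w}(\xi_I\xi_{II})=D_{i, w}(\xi_I)\,D_{i, w'}(\xi_{II}),\qquad w'=B_{i,\xi_I}(w)\in X^*_{(i)}.
$$
Since $B_{i,\xi_I}$ preserves lengths, $w'$ is a genuine word over the changing alphabet starting at level $i$, so $D_{i, w'}$ is defined (and equals $D_{i,w}$ when $\xi_I=\emptyset$). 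Thus the lemma reduces to the single assertion that $D_{i, w'}(\xi_{II})$ equals either $\xi_{II}$ or $\widetilde{\xi_{II}}$, for every $i\in\no$ and every $w'\in X^*_{(i)}$.

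Next I would isolate the combinatorial heart of the matter: for a nonempty pattern $V$ of the alternating form~(ii) there are exactly two freely irreducible words in $Q^*_\pm$ following $V$, and the map $\zeta\mapsto\widetilde\zeta$ interchanges them. The reason is that $V$ prescribes the sign (a symbol $\ast$ or $\ast^{-1}$) of every position and, being of the form in~(ii), forces consecutive positions to carry opposite signs. Free irreducibility then forbids the cancelling pairs $aa^{-1}$, $a^{-1}a$, $bb^{-1}$, $b^{-1}b$; under opposite signs this means that each letter must differ in its base symbol from its predecessor. Hence once the first letter is chosen, all remaining letters are uniquely determined, so there are exactly two freely irreducible words following $V$ (one for each admissible first letter), and interchanging $a\leftrightarrow b$, $a^{-1}\leftrightarrow b^{-1}$ sends one to the other.

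Finally I would assemble the pieces. The word $\xi_{II}$ is a suffix of the freely irreducible word $\xi$, hence itself freely irreducible, and by construction its pattern $V_{II}$ is of the form~(ii), i.e.\ alternating. By Lemma~\ref{lem1} the image $D_{i, w'}(\xi_{II})$ follows the same pattern $V_{II}$ and is again freely irreducible, so by the combinatorial claim it must be one of the only two candidates, namely $\xi_{II}$ or $\widetilde{\xi_{II}}$. Substituting back into the displayed factorization yields $D_{i, w}(\xi)=D_{i, w}(\xi_I)\xi_{II}$ or $D_{i, w}(\xi)=D_{i, w}(\xi_I)\widetilde{\xi_{II}}$, as required. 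The only step demanding care, though it is bookkeeping rather than difficulty, is the combinatorial claim; everything else is a direct application of (\ref{ee2}) and Lemma~\ref{lem1}. In particular no induction on $|w|$ is needed, because Lemma~\ref{lem1} is already stated for the composite mapping $D_{i, w}$.
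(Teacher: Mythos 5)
Your proposal is correct and follows essentially the same route as the paper: factor $D_{i,w}(\xi)$ via equality (\ref{ee2}) of Proposition~\ref{p1}, invoke Lemma~\ref{lem1} to see that $D_{i,w'}(\xi_{II})$ is freely irreducible with the same pattern as $\xi_{II}$, and conclude from the fact that each alternating pattern of type (ii) admits exactly two freely irreducible words, interchanged by $\zeta\mapsto\widetilde{\zeta}$. The only cosmetic difference is that the paper simply lists the two words $(ab^{-1})^la^r$, $(ba^{-1})^lb^r$ (resp.\ $(a^{-1}b)^la^{-r}$, $(b^{-1}a)^lb^{-r}$) explicitly, whereas you derive the count by the "base symbol must change" argument; both are fine.
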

\begin{proof}
The thesis is obvious in the case when $\xi$  is empty. So, let us assume that $\xi$ is nonempty. By Proposition~\ref{p1} we have $D_{i, w}(\xi)=D_{i,w}(\xi_I)D_{i, v}(\xi_{II})$ for some $v\in X^*_{(i)}$.
By Lemma~\ref{lem1} the words $D_{i, v}(\xi_{II})$ and $\xi_{II}$ are  freely irreducible and follow the same pattern. The words $\xi_{II}$ and $\widetilde{\xi_{II}}$ are distinct, freely irreducible and also follow the same pattern. But this pattern is equal to $(\ast\ast^{-1})^l\ast^r$ or to $(\ast^{-1}\ast)^l\ast^{-r}$ for some $l\geq 0$, $r\in\{0,1\}$ and for each of the above two patterns there are only two freely irreducible words which follow  this pattern: $(ab^{-1})^la^r$ and $(ba^{-1})^lb^r$ for $(\ast\ast^{-1})^l\ast^r$, and $(a^{-1}b)^la^{-r}$ and $(b^{-1}a)^lb^{-r}$ for  $(\ast^{-1}\ast)^l\ast^{-r}$.  In each case one of this word is equal to $\xi_{II}$ and the other is equal to $\widetilde{\xi_{II}}$. This implies $D_{i, v}(\xi_{II})=\widetilde{\xi_{II}}$ or $D_{i, v}(\xi_{II})=\xi_{II}$ in each case.
\end{proof}

\begin{prop}\label{p3}
For any nonempty pattern $V$ there is a freely irreducible word  $\xi\in Q^*_\pm$ which follows this pattern and such that $D_{i, z}(\xi)=\xi_{I}\widetilde{\xi_{II}}$ for some $z\in X_{i}$, where $i=\lambda_{|V|}$.
\end{prop}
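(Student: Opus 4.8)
The plan is to construct the desired word $\xi$ explicitly, exploiting the special structure of the component $\Gamma_i$ shown in Figure~3 together with the stabilization estimates established in Proposition~\ref{prep}. Given a nonempty pattern $V$ of length $n=|V|$, I set $i=\lambda_n$, so that by the definition of $\lambda_n$ we have $r_i>2n$. The key point established in the proof of Proposition~\ref{prep} is that for such $i$ there exist letters $t_{i,\epsilon}$ ($\epsilon\in\{1,2\}$) whose forward orbits $\pi_{i,\epsilon}^k(t_{i,\epsilon})$, $1\le k\le 2n$, stay inside the ``active'' window $\widetilde{X}_i$, where the transition functions can genuinely move a state between $\{a,b\}$ and between $\{a^{-1},b^{-1}\}$ (through the letters $1$ and $2$), rather than fixing every state. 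Outside this window every $D_{i,x}$ acts as the identity on $Q_{\pm}^n$. So the whole problem reduces to choosing a single starting letter $z\in\{t_{i,1},t_{i,2}\}$ whose orbit under the output permutations drives the dual action along the arrows of $\Gamma_i$ in exactly the way needed.

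\emph{First} I would fix the pattern $V$ and read off, letter by letter, a candidate freely irreducible word $\xi=q_1q_2\ldots q_n$ following $V$, to be determined recursively. \emph{Second}, I track the letter evolution $x_1=z$, $x_{j+1}=\psi_i(q_j,x_j)=\pi_{i,\epsilon_j}(x_j)$, where $\epsilon_j$ is $1$, $2$, or $3$ according to whether $q_j$ equals $a$, $a^{-1}$, or lies in $\{b,b^{-1}\}$. By Lemma~\ref{lem2}, since $\xi$ is freely irreducible we automatically have $D_{i,z}(\xi)\in\{\xi_I\xi_{II},\,\xi_I\widetilde{\xi_{II}}\}$, so the only thing left to arrange is that the dual mapping actually \emph{flips} the second part $\xi_{II}$ rather than fixing it. Thus the real content is: choose $z$ and the states $q_j$ so that the path in $\Gamma_i$ starting at $(i,z)$ with input entries $q_1,\ldots,q_n$ traverses exactly the arrows labeled by the letter $1$ (equivalently $2$) on the final block corresponding to $\xi_{II}$, which is precisely what toggles $a\leftrightarrow b$ and $a^{-1}\leftrightarrow b^{-1}$ and hence produces $\widetilde{\xi_{II}}$.

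\emph{Third}, I would handle the decomposition $V=V_IV_{II}$ from the two cases in condition (ii), namely $V_{II}=(\ast\ast^{-1})^l\ast^r$ and $V_{II}=(\ast^{-1}\ast)^l\ast^{-r}$. Choosing $z=t_{i,1}$ for the first shape and $z=t_{i,2}$ for the second (so that the orbit runs along the whole active window of length $2n\ge 2|V_{II}|$), I build $\xi_{II}$ as the unique freely irreducible word over this pattern that matches the output entries encountered, and then prepend any freely irreducible completion $\xi_I$ consistent with $V_I$ and with condition (i) on the boundary letter. The orbit bound $1\le k\le 2n$ guarantees we never run out of active letters before the path is complete, so every $x_j$ with $1\le j\le n$ lies in $\widetilde{X}_i$ and in particular can equal $1$ or $2$ exactly where the flip is required.

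\emph{The main obstacle} I anticipate is the bookkeeping of condition (i): the last letter of $\xi_I$ must match the first letter of $\xi_{II}$ as a pattern symbol, and simultaneously the letter-orbit $x_1,\ldots,x_n$ must enter the flipping regime precisely at the index where $\xi_{II}$ begins, with $\xi_I$ being transported but not necessarily flipped. Verifying that the two freely irreducible words compatible with each shape of $V_{II}$ are interchanged by the flip $\xi_{II}\mapsto\widetilde{\xi_{II}}$, and that the particular letter $z$ forces the flip rather than the identity on that block, is the delicate step; it amounts to checking that the specific arrow labels in $\Gamma_i$ along the chosen orbit carry the input entries of $\xi_{II}$ to the input entries of $\widetilde{\xi_{II}}$. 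Everything else is a direct consequence of Lemma~\ref{lem1}, Lemma~\ref{lem2}, and the window estimates already proved in Proposition~\ref{prep}.
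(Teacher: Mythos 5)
Your overall strategy coincides with the paper's: build $\xi$ explicitly, track the letter orbit $x_1=z$, $x_{j+1}=\pi_{i,\epsilon_j}(x_j)$, and use the fact that only the letters $1,2$ can move a state, so the whole problem is to make the orbit hit $\{1,2\}$ exactly at the start of $\xi_{II}$ and nowhere inside $\xi_I$. But the two concrete choices you commit to are wrong, and the step you defer as ``the delicate step'' is in fact the entire proof. Take $V=\ast^n$ with $n\geq 2$, so $V_I=\ast^{n-1}$, $V_{II}=\ast$, and the goal is $D_{i,z}(\xi)=\xi_I\widetilde{\xi_{II}}$, e.g.\ $a^{n-1}b$ for $\xi=a^n$. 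With your choice $z=t_{i,1}=r_i-n+1$ the letters fed to the states satisfy $x_j\leq x_1+(j-1)\leq r_i$ (each state increments the letter by at most one and none wraps past $r_i$), so no $x_j$ ever equals $1$ or $2$; hence $D_{i,t_{i,1}}(\xi)=\xi$ for \emph{every} $\xi$ following $\ast^n$, and no flip is possible. The starting letter must be tuned to the number of incrementing states in $\xi_I$, not to $n$. Likewise, ``prepend any freely irreducible completion $\xi_I$'' cannot work: at a $\ast$ position the state may be $a$ (letter incremented) or $b$ (letter fixed), and at a $\ast^{-1}$ position it may be $a^{-1}$ (decremented) or $b^{-1}$ (fixed), so different completions produce different orbits and deliver different letters to $\xi_{II}$. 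You also cannot ``build $\xi_{II}$ from $z$'' and then prepend $\xi_I$ afterwards, since the letter reaching $\xi_{II}$ is $z$ transported through all of $\xi_I$, not $z$ itself.

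A further slip: you quote Lemma~\ref{lem2} as giving $D_{i,z}(\xi)\in\{\xi_I\xi_{II},\xi_I\widetilde{\xi_{II}}\}$, but the lemma only gives $D_{i,z}(\xi)\in\{D_{i,z}(\xi_I)\xi_{II},\,D_{i,z}(\xi_I)\widetilde{\xi_{II}}\}$; the equality $D_{i,z}(\xi_I)=\xi_I$ demanded by the proposition is an extra condition, which holds only when the orbit avoids $\{1,2\}$ throughout $\xi_I$. The paper resolves all of this at once by a coordinated choice: in the first case it puts $a$ at every $\ast$ position and $b^{-1}$ at every $\ast^{-1}$ position of $\xi_I$ (so the letter is monotonically incremented by the $a$'s and untouched by the $b^{-1}$'s) and takes $z=r_i-N-m+1$, where $N+m$ is the number of $a$'s in $\xi_I$; then the orbit reaches $r_i$ precisely at the last letter of $\xi_I$, stays in $\{3,\dots,r_i\}$ up to that point (fixing $\xi_I$), and feeds the letter $1$ to the first letter of $\xi_{II}$, after which the cycle $1\mapsto 2\mapsto 1$ through $a,b^{-1}$ produces exactly $\widetilde{\xi_{II}}$. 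Without this coordination of $z$ with $\xi_I$, your construction does not go through.
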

\begin{proof}
Let $V=V_IV_{II}$. At first let us assume that $V_I\neq \emptyset$ and the last letter of $V_I$ is $\ast$. Then there is $k\geq 0$ such that
$$
V_I=\ast^{n_1}\ast^{-m_1}\ast^{n_2}\ast^{-m_2}\ldots \ast^{n_k}\ast^{-m_k}\ast^m,
$$
where  $n_1\geq 0$, $n_j>0$ for $2\leq j\leq k$, $m_j>0$ for $1\leq j\leq k$, $m>0$. For the second part of $V$ we have in this case: $V_{II}=(\ast\ast^{-1})^l\ast^r$
for some $l\geq 0$, $r\in\{0,1\}$ and $(l, r)\neq (0,0)$. Let $\xi=\xi_{I}\xi_{II}\in Q^*_\pm$ be a word with the first part
$$
\xi_I=a^{n_1}b^{-m_1}a^{n_2}b^{-m_2}\ldots a^{n_k}b^{-m_k}a^m
$$
and the second part $\xi_{II}=(ab^{-1})^la^r$. Then $\xi$ is freely irreducible and follows the pattern $V$. For the number $i=\lambda_{|V|}$ we have: $r_i>2|V|\geq N+m+1$, where $N=n_1+n_2+\ldots+n_k$. Also $D_{i, x}(q\eta)=\varphi_i(q, x)D_{i, \psi_i(q, x)}(\eta)$ for all $\eta\in Q^*_\pm$, $q\in Q_\pm$, $x\in X_i$.
Since $\psi_i(a, x)=x+1$ and $\varphi_i(a, x)=a$ for $1<x<r_i$, we obtain
$$
D_{i, z}(\xi)=a^{n_1}D_{i, z+n_1}(b^{-m_1}a^{n_2}b^{-m_2}\ldots a^{n_k}b^{-m_k}a^m\xi_{II}),
$$
where $z=r_i-N-m+1>2$.
Further, $\psi_i(b^{-1}, x)=x$ and $\varphi_i(b^{-1}, x)=b^{-1}$ for $2<x<r_i$. In consequence
$$
D_{i, z}(\xi)=a^{n_1}b^{-m_1}D_{i, z+n_1}(a^{n_2}b^{-m_2}\ldots a^{n_k}b^{-m_k}a^m\xi_{II}).
$$
Repeating this reasoning we obtain
$$
D_{i, z}(\xi)=a^{n_1}b^{-m_1}\ldots a^{n_k}b^{-m_k}a^{m-1} D_{i, z_0}(a\xi_{II}),
$$
where $z_0=z+N+m-1=r_i$.
Since $\psi_i(a, r_i)=1$ and $\varphi_i(a, r_i)=a$, we obtain
$$
D_{i, z_0}(a\xi_{II})=D_{i, r_i}(a\xi_{II})=aD_{i, 1}(\xi_{II}),
$$
and in consequence $D_{i, z}(\xi)=\xi_ID_{i, 1}(\xi_{II})$.
Further, since $\psi_i(a, 1)=2$, $\varphi_i(a, 1)=b$, $\psi_i(b^{-1}, 2)=1$ and $\varphi_i(b^{-1}, 2)=a^{-1}$,
we have for every $\eta\in Q^*_\pm$:
$$
D_{i, 1}(ab^{-1}\eta)=bD_{i, 2}(b^{-1}\eta)=ba^{-1}D_{i, 1}(\eta).
$$
Thus $D_{i, 1}((ab^{-1})^l\eta)=(ba^{-1})^lD_{i, 1}(\eta)$ and
$$
D_{i, 1}(\xi_{II})=D_{i, 1}((ab^{-1})^la^r)=(ba^{-1})^lD_{i,1}(a^r)=(ba^{-1})^lb^r=\widetilde{\xi_{II}}.
$$
Consequently $D_{i, z}(\xi)=\xi_{I}\widetilde{\xi_{II}}$. Now, let us assume that the last letter of $V_I$ is $\ast^{-1}$. Then there is $k\geq 0$ such that
$$
V_I=\ast^{-n_1}\ast^{m_1}\ast^{-n_2}\ast^{m_2}\ldots \ast^{-n_k}\ast^{m_k}\ast^{-m},
$$
where  $n_1\geq 0$, $n_j>0$ for $2\leq j\leq k$, $m_j>0$ for $1\leq j\leq k$, $m>0$. For the second part of $V$ we have in this case: $V_{II}=(\ast^{-1}\ast)^l\ast^{-r}$
for some $l\geq 0$, $r\in\{0,1\}$ and $(l, r)\neq (0,0)$. Let $\xi=\xi_{I}\xi_{II}\in Q^*_\pm$ be a word with the first part
$$
\xi_I=a^{-n_1}b^{m_1}a^{-n_2}b^{m_2}\ldots a^{-n_k}b^{m_k}a^{-m}
$$
and the second part $\xi_{II}=(a^{-1}b)^la^{-r}$. Then $\xi$ is freely irreducible and follows the pattern $V$. Similarly, as in the previous case, we  show that $D_{i, z}(\xi)=\xi_{I}\widetilde{\xi_{II}}$, where $i=\lambda_{|V|}$ and $z=N+m+2\in X_i$. Now, let us assume that $V_I=\emptyset$. Then $V=(\ast\ast^{-1})^l\ast^{r}$ or $V=(\ast^{-1}\ast)^l\ast^{-r}$ for some $l\geq 0$, $r\in\{0,1\}$ and $(l, r)\neq (0,0)$. In the first case we take $\xi=(ab^{-1})^la^{r}$ and in the second case we take $\xi= (a^{-1}b)^la^{-r}$. In particular $\xi_{I}=\emptyset$ and $\xi_{II}=\xi$. As before,  we verify for every $i\in\no$ that $D_{i, 1}(\xi)=\xi_{I}\widetilde{\xi_{II}}$ in the first case and $D_{i, 2}(\xi)=\xi_{I}\widetilde{\xi_{II}}$ in the second case.
\end{proof}

\begin{prop}\label{p4}
Let $\xi,\eta\in Q^*_\pm$ be two freely irreducible words following the same pattern. Then  $D_{\lambda_n, w}(\xi)=\eta$ for some $w\in X^*_{(\lambda_n)}$, where $n=|\xi|=|\eta|$.
\end{prop}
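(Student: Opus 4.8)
The plan is to reduce the statement to a transitivity property of the \emph{group} generated by the dual mappings of $B$, and then to establish that property by induction on the length of the pattern using the decomposition $V=V_IV_{II}$.

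\emph{Reduction.} The automaton $B$ is finite and state-invertible, and by Proposition~\ref{prep} it is stabilized with stabilizing sequence $(\lambda_1,\lambda_2,\ldots)$, so equation (\ref{e22}) of Proposition~\ref{prop2} applies. Writing $H=gp\{D_{i,w}\colon i\geq\lambda_n,\ w\in X^*_{(i)}\}$, it gives $H\sim_n\{D_{\lambda_n,w}\colon w\in X^*_{(\lambda_n)}\}$. Since $|\xi|=|\eta|=n$, it therefore suffices to find $g\in H$ with $g(\xi)=\eta$: then $g\sim_n D_{\lambda_n,w}$ for some $w$, and evaluating on $\xi\in Q_\pm^n$ yields $D_{\lambda_n,w}(\xi)=g(\xi)=\eta$. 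By Lemma~\ref{lem1} the elements of $H$ preserve both the pattern and free irreducibility, so $H$ acts on the finite set $\mathcal F(V)$ of freely irreducible words following a given pattern $V$, and the goal becomes transitivity of this action.

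\emph{The induction.} I would prove, by induction on $m=|V|$, the stronger claim $(\star_m)$: for every pattern $V$ of length $m$ and every $i\geq\lambda_m$, the group $H_i=gp\{D_{j,w}\colon j\geq i,\ w\in X^*_{(j)}\}$ acts transitively on $\mathcal F(V)$. The extra freedom in the starting level $i$ is what makes the induction close, because the inductive step feeds back a \emph{shorter} first part that nevertheless must be moved by the same (smaller) group $H_i$. For the base case $m=1$ the patterns are $\ast$ and $\ast^{-1}$, with $\mathcal F(\ast)=\{a,b\}$ and $\mathcal F(\ast^{-1})=\{a^{-1},b^{-1}\}$; formula (\ref{e4}) gives $D_{i,1}(a)=b$, $D_{i,1}(b)=a$ and $D_{i,2}(a^{-1})=b^{-1}$, $D_{i,2}(b^{-1})=a^{-1}$, so the required transpositions lie in $H_i$ for every $i$.

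\emph{Inductive step.} Decompose $V=V_IV_{II}$ with $V_{II}$ the nonempty second part, so $|V_I|=m-|V_{II}|<m$. By condition~(i) there is no sign change at the junction, hence every freely irreducible word following $V$ factors uniquely as $\alpha\beta$ with $\alpha\in\mathcal F(V_I)$ and $\beta$ in the two-element set $\mathcal F(V_{II})=\{u,\widetilde u\}$, and conversely every such concatenation is freely irreducible. Lemma~\ref{lem2} shows that the first-part projection $p\colon\mathcal F(V)\to\mathcal F(V_I)$, $\alpha\beta\mapsto\alpha$, is $H_i$-equivariant (each generator $D_{j,w}$ sends $\alpha\beta$ to $D_{j,w}(\alpha)$ followed by $\beta$ or $\widetilde\beta$). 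Two ingredients then suffice: \textbf{(2a)} transitivity of $H_i$ on the base $\mathcal F(V_I)$, which is exactly $(\star_{m'})$ with $m'=|V_I|<m$ at the level $i$ (legitimate since $i\geq\lambda_m\geq\lambda_{m'}$ because $(\lambda_m)$ is nondecreasing); and \textbf{(2b)} a fiber flip, namely an element of $H_i$ fixing a prescribed first part and exchanging the two words of its fiber. Granting both, transitivity on $\mathcal F(V)$ follows at once: use (2a) to carry the first part of $\xi$ to that of $\eta$, then, if the second part is wrong, correct it by (2b).

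\emph{The fiber flip and the main obstacle.} For (2b) I would invoke Proposition~\ref{p3}, whose construction uses only $r_i>2|V|$ and is therefore available at every level $i\geq\lambda_m$, not merely at $i=\lambda_{|V|}$. It produces a specific freely irreducible $\xi^\ast=\xi^\ast_I\xi^\ast_{II}$ following $V$ with $D_{i,z}(\xi^\ast)=\xi^\ast_I\widetilde{\xi^\ast_{II}}$ for some $z\in X_i$. Since $D_{i,z}$ fixes $\xi^\ast_I$ downstairs and is a bijection of $\mathcal F(V)$, it restricts to the transposition of the two-element fiber over $\xi^\ast_I$. To obtain a flip over an \emph{arbitrary} first part $\alpha$, choose by (2a) an element $g\in H_i$ whose downstairs action sends $\xi^\ast_I$ to $\alpha$, and conjugate: $h=g\,D_{i,z}\,g^{-1}\in H_i$ then fixes $\alpha$ downstairs and transposes its fiber. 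This conjugation argument — upgrading the single flip of Proposition~\ref{p3} to a flip over every first part, together with the observation that $D_{i,z}$ really restricts to the nontrivial permutation of its fiber — is the crux of the proof. The remaining bookkeeping is routine: the edge case $V_I=\emptyset$, where $\mathcal F(V)=\{u,\widetilde u\}$ and Proposition~\ref{p3} already supplies the flip directly, and the level alignment $\lambda_{m'}\leq\lambda_m$ used to apply the inductive hypothesis.
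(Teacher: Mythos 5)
Your proposal is correct and follows essentially the same route as the paper: induction on the pattern length, the decomposition $V=V_IV_{II}$, Lemma~\ref{lem2} for the equivariance of the first-part projection, Proposition~\ref{p3} for the flip of the second part, and Proposition~\ref{prop2} to convert a group element back into a single $D_{\lambda_n,w}$. The only difference is organizational: where you conjugate the flip $D_{i,z}$ by a base-transitive element to obtain a flip over an arbitrary first part, the paper instead maps both $\xi$ and $\eta$ onto the distinguished word $\zeta$ of Proposition~\ref{p3} (or its flipped image) and finishes with a short case analysis; the two are interchangeable, and your mild extension of Proposition~\ref{p3} to all levels $i\geq\lambda_{|V|}$ is legitimate since its proof uses only $r_i>2|V|$.
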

\begin{proof}
Let $V$ be the pattern of the words $\xi$ and $\eta$. We use the induction on the length $n$ of the pattern $V$. The thesis is obvious in case $n=0$. If $n=1$, then the thesis holds as for every $i\in\no$ we have $D_{i, 11}(a)=a$, $D_{i, 1}(a)=b$, $D_{i, 2}(b)=b$, $D_{i, 1}(b)=a$, $D_{i, 22}(a^{-1})=a^{-1}$, $D_{i, 2}(a^{-1})=b^{-1}$, $D_{i, 1}(b^{-1})=b^{-1}$ and $D_{i, 2}(b^{-1})=a^{-1}$.

Let $n=|V|\geq 2$ and let us assume that  the thesis holds for all nonempty patterns  with the length smaller than $n$.
Let $V=V_{I}V_{II}$ be the decomposition of $V$ into the first and the second part. Similarly, let $\xi=\xi_{I}\xi_{II}$ and $\eta=\eta_{I}\eta_{II}$ be decompositions of $\xi$ and $\eta$ in their first and second parts. By Proposition~\ref{p3} there is a freely irreducible word $\zeta=\zeta_{I}\zeta_{II}\in Q^*_\pm$ which follows the pattern $V$ and such that $D_{\lambda_n, z}(\zeta)=\zeta_{I}\widetilde{\zeta_{II}}$ for some  $z\in X_{\lambda_n}$.

The words $\xi_{II}$, $\eta_{II}$ and $\zeta_{II}$ are freely irreducible and follow the pattern $V_{II}$, which is equal to $(**^{-1})^l*^r$ or to $(*^{-1}*)^l*^{-r}$  for some $l\geq 0$ and $r\in\{0,1\}$. Also the words $\widetilde{\xi_{II}}$, $\widetilde{\eta_{II}}$ and $\widetilde{\zeta_{II}}$ are freely irreducible and follow the pattern $V_{II}$. As in the proof of Lemma~\ref{p4} we see that for each of the above two patterns there are only two freely irreducible words which follow this pattern. Since $\xi_{II}\neq \widetilde{\xi_{II}}$, $\eta_{II}\neq \widetilde{\eta_{II}}$ and $\zeta_{II}\neq \widetilde{\zeta_{II}}$, we obtain:
\begin{equation}\label{e2}
\left\{\xi_{II}, \widetilde{\xi_{II}}\right\}=\left\{\eta_{II}, \widetilde{\eta_{II}}\right\}=\left\{\zeta_{II}, \widetilde{\zeta_{II}}\right\}.
\end{equation}

The words $\xi_{I}$, $\eta_{I}$ and $\zeta_{I}$ are freely irreducible and follow the pattern $V_{I}$ with the length $k=|V_{I}|<|V|=n$. Thus, by the inductive assumption, we have $\zeta_I=D_{\lambda_k, v}(\xi_{I})=D_{\lambda_k, u}(\eta_{I})$ for some $v, u\in X^*_{(\lambda_k)}$. Since $\lambda_n\geq \lambda_k$, by Proposition~\ref{prop1} we have $D_{\lambda_k, v}\sim_k D_{\lambda_n, v'}$ and $D_{\lambda_k, u}\sim_k D_{\lambda_n, u'}$ for some $v', u'\in X^*_{(\lambda_n)}$. Then $\zeta_I=D_{\lambda_n, v'}(\xi_{I})=D_{\lambda_n, u'}(\eta_{I})$. Now, by Lemma~\ref{lem2} and by the equalities~(\ref{e2}) we see that each of the words $D_{\lambda_n, v'}(\xi)$ and $D_{\lambda_n, u'}(\eta)$ is equal to $\zeta_{I}\zeta_{II}=\zeta$ or to $\zeta_{I}\widetilde{\zeta_{II}}=D_{\lambda_n, z}(\zeta)$. If $D_{\lambda_n, v'}(\xi)=D_{\lambda_n, u'}(\eta)$, then $g(\xi)=\eta$, where $g=(D_{\lambda_n, u'})^{-1}D_{\lambda_n, v'}$. By Proposition~\ref{prop2} there is $w\in X^*_{(\lambda_n)}$ such that $D_{\lambda_n, w}(\xi)=g(\xi)=\eta$. If $D_{\lambda_n, v'}(\xi)\neq D_{\lambda_n, u'}(\eta)$, then one of the words $D_{\lambda_n, v'}(\xi)$, $D_{\lambda_n, u'}(\eta)$  is equal to $\zeta$ and the other is equal to $D_{\lambda_n, z}(\zeta)$. If $D_{\lambda_n, v'}(\xi)=\zeta$ and $D_{\lambda_n, u'}(\eta)=D_{\lambda_n, z}(\zeta)$, then $g(\xi)=\eta$, where $g=(D_{\lambda_n, u'})^{-1}D_{\lambda_n, z}D_{\lambda_n, v'}$. If $D_{\lambda_n, v'}(\xi)=D_{\lambda_n, z}(\zeta)$ and $D_{\lambda_n, u'}(\eta)=\zeta$, then $g(\xi)=\eta$, where $g=(D_{\lambda_n, u'})^{-1}(D_{\lambda_n, z})^{-1}D_{\lambda_n, v'}$. As in the previous case, we obtain by Proposition~\ref{prop2} that there is $w\in X^*_{(\lambda_n)}$ such that $D_{\lambda_n, w}(\xi)=g(\xi)=\eta$. The claim follows.
\end{proof}

\begin{prop}\label{prop21}
Let $\xi, \eta, \zeta\in Q^*_{\pm}$ be freely irreducible words following the same pattern. Then for every $k\in\no$ there are $w, v\in X^*_{(1)}$ such that $D_{1, w}(\xi)=\eta$, $D_{1, v}(\xi)=\zeta$ and $|w|=|v|\geq k$.
\end{prop}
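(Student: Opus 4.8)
The plan is to split the statement into two essentially separate tasks. First, I would produce \emph{some} words $w_0,v_0\in X^*_{(1)}$ with $D_{1,w_0}(\xi)=\eta$ and $D_{1,v_0}(\xi)=\zeta$, ignoring length constraints. Second, I would adjust $w_0$ and $v_0$ into words $w,v$ of a common length $M\ge k$ without changing their images. Throughout set $n=|\xi|=|\eta|=|\zeta|$; these lengths agree because the three words share a pattern.

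For the reaching task I would transport Proposition~\ref{p4}, which supplies transforming words indexed from $\lambda_n$, down to index $1$ using the composition law $D_{1,uv}=D_{\lambda_n,v}D_{1,u}$. Concretely, pick any $u\in X^*_{(1)}$ with $|u|=\lambda_n-1$ (possible since every set of letters is nonempty). By Lemma~\ref{lem1} the word $\xi'=D_{1,u}(\xi)$ is again freely irreducible and follows the pattern of $\xi$, hence of $\eta$. Proposition~\ref{p4} then gives $v'\in X^*_{(\lambda_n)}$ with $D_{\lambda_n,v'}(\xi')=\eta$, so that $w_0=uv'$ satisfies $D_{1,w_0}(\xi)=D_{\lambda_n,v'}(\xi')=\eta$ with $|w_0|\ge\lambda_n-1$. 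The same recipe yields $v_0$ with $D_{1,v_0}(\xi)=\zeta$ and $|v_0|\ge\lambda_n-1$.

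For the length adjustment I would pad with letters that act trivially on $Q_\pm^n$. The proof of Proposition~\ref{prep} establishes that for $i\ge\lambda_n$ and $x\in X_i\setminus\widetilde{X}_i$ one has $D_{i,x}(\chi)=\chi$ for every $\chi\in Q_\pm^n$, and since $|\widetilde{X}_i|=2n<r_i$ the set $X_i\setminus\widetilde{X}_i$ is nonempty. Put $M=\max(|w_0|,|v_0|,k)$; because $|w_0|,|v_0|\ge\lambda_n-1$, every index at which a letter is appended is at least $\lambda_n$. Appending to $w_0$ one letter $t_i\in X_i\setminus\widetilde{X}_i$ at each index $i=|w_0|+1,\dots,M$ gives $w\in X^*_{(1)}$ with $|w|=M$; since $D_{1,w}(\xi)$ is the result of applying these appended dual mappings to $D_{1,w_0}(\xi)=\eta$, and each of them fixes the length-$n$ word $\eta$, we get $D_{1,w}(\xi)=\eta$. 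Padding $v_0$ the same way produces $v$ with $|v|=M$ and $D_{1,v}(\xi)=\zeta$, so that $|w|=|v|=M\ge k$, as required.

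The reaching step is routine once Proposition~\ref{p4} is in hand; the point that needs care is the padding, where one must \emph{simultaneously} equalize $|w|$ and $|v|$ and force their common value past $k$ while leaving the images untouched. This is exactly what the ``middle'' letters of $X_i\setminus\widetilde{X}_i$ allow, because for all $i\ge\lambda_n$ they act as the identity on $Q_\pm^n$, so arbitrarily many of them may be inserted harmlessly — that observation, extracted from the stabilization argument, is the crux of the proof.
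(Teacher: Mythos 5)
Your proposal is correct and follows essentially the same route as the paper: transport Proposition~\ref{p4} down to index $1$ by prefixing with an arbitrary word $u$ (using Lemma~\ref{lem1} to keep free irreducibility and the pattern), then pad with letters from $X_i\setminus\widetilde{X}_i$, whose dual mappings fix $Q_\pm^n$ pointwise by the stabilization argument. The only difference is bookkeeping — the paper takes $|u|\ge\max\{k,\lambda_n\}$ up front and invokes Proposition~\ref{prop1} to shift the transforming word, then pads only the shorter of the two words, whereas you take $|u|=\lambda_n-1$ so the concatenation works directly and pad both words up to a common length $M\ge k$.
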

\begin{proof}
Let $k\in\no$ and let us denote $n=|\xi|=|\eta|=|\zeta|$. Let $u\in X^*_{(1)}$ be any word of the length $|u|\geq \max\{k, \lambda_n\}$. Let us denote $D_{1, u}(\xi)=\xi'$. Then by Lemma~\ref{lem1} the word $\xi'$ is freely irreducible and follows the same pattern as $\xi$. Thus by proposition~\ref{p4} there is $u'\in X^*_{(\lambda_n)}$ such that $D_{\lambda_n, u'}(\xi')=\eta$. Since $1+|u|\geq \lambda_n$, by Proposition~\ref{prop1} we have $D_{1+|u|, u''}\sim_n D_{\lambda_n, u'}$ for some $u''\in X_{(1+|u|)}$. Then $D_{1+|u|, u''}(\xi')=\eta$. In consequence for the word $w=uu''\in X^*_{(1)}$ we have:
$$
D_{1, w}(\xi)=D_{1, uu''}(\xi)=D_{1+|u|, u''}D_{1, u}(\xi)=D_{1+|u|, u''}(\xi')=\eta.
$$
Also we have $|w|\geq |u|\geq \max\{k, \lambda_n\}$. Similarly, we show that there is a word $v\in X^*_{(1)}$ such that $|v|\geq \max\{k, \lambda_n\}$ and $D_{1, v}(\xi)=\zeta$. If $|w|=|v|$, then we obtain the thesis. So, let us assume $|v|<|w|$ and let us denote $l=|w|-|v|$. For $1\leq i\leq l$ let $x_i\in X_{i+|v|}$ be any letter such that $n+2\leq x_i\leq r_{i+|v|}-n+1$.
Such a letter exists as $r_{i+|v|}\geq r_{|v|}\geq r_{\lambda_n}>2n$. As in the proof of Proposition~\ref{prep} we show that the dual mappings $D_{|v|+i, x_i}$ ($1\leq i\leq l$) are $n$-equivalent to $Id_{Q^*_\pm}$. In particular, for the word $w'=x_1\ldots x_l\in X^*_{(1+|v|)}$ we obtain
$$
D_{1+|v|, w'}(\zeta)=D_{|v|+l, x_l}D_{|v|+l-1, x_{l-1}}\ldots D_{|v|+1, x_1}(\zeta)=\zeta.
$$
Consequently, for the word $vw'\in X^*_{(1)}$ we obtain
$$
D_{1, vw'}(\xi)=D_{1+|v|, w'}D_{1, v}(\xi)=D_{1+|v|, w'}(\zeta)=\zeta.
$$
Since $|vw'|=|v|+|w'|=|v|+l=|w|$, we obtain the thesis.
\end{proof}

\noindent
\begin{proof}[{\bf of Theorem~\ref{t1}}]
Let us assume that the group $G(A)$ generated by the automaton $A$ is not free. Then there is $s\in\no$ and the integers $n_1,\ldots, n_s$, $m_1, \ldots, m_s$ not all equal to 0 and such that $n_i\neq 0$ for $1<i\leq s$, $m_i\neq 0$ for $1\leq i<s$ and
\begin{equation}\label{e1234}
A_{1, a}^{n_1}A_{1, b}^{m_1}\ldots A_{1, a}^{n_s}A_{1, b}^{m_s}=Id_{X^*_{(1)}}.
\end{equation}
In particular, the word $\xi=a^{n_1}b^{m_1}\ldots a^{n_s}b^{m_s}\in Q^*_{\pm}$ is  freely irreducible and not empty. By the equalities (\ref{e123}) we obtain that the left side of (\ref{e1234}) is equal to $B_{1, \xi}$. Thus $B_{1, \xi}=Id_{X^*_{(1)}}$. Let $\xi_{II}$ be the second part of the word $\xi=\xi_I\xi_{II}$. Then there are $l\geq 0$ and $r\in\{0,1\}$ such that $(l,r)\neq (0,0)$ and
$$
\xi_{II}\in\{(ab^{-1})^la^r,\;(ba^{-1})^lb^r,\;(a^{-1}b)^la^{-r},\;(b^{-1}a)^lb^{-r}\}.
$$
By easy computation we obtain
\begin{equation}\label{e010}
\xi_{II}^{-1}\widetilde{\xi_{II}}\in\{\eta_1, \eta_1^{-1}, \eta_2, \eta_2^{-1}\},
\end{equation}
where $\eta_1=a^r(b^{-1}a)^{2l}b^{-r}$, $\eta_2=a^{-r}(ba^{-1})^{2l}b^r$. Let us take $k\in\no$  such that $r_k>2l+r+2$. By Proposition~\ref{prop21}  there are $w, v\in X^*_{(1)}$ with $|w|=|v|\geq k$ and such that $D_{1, w}(\xi)=\xi$ and $D_{1, v}(\xi)=\xi'$, where $\xi'=\xi_{I}\widetilde{\xi_{II}}$. Let us denote $i=1+|w|=1+|v|$. Then by Proposition~\ref{p1} we obtain
$B_{1, \xi}(wu)=B_{1, \xi}(w)B_{i, \xi}(u)$ and $B_{1, \xi}(vu)=B_{1, \xi}(v)B_{i, \xi'}(u)$ for all $u\in X^*_{(i)}$.
Since $B_{1, \xi}=Id_{X^*_{(1)}}$, we obtain $B_{i, \xi}=Id_{X^*_{(i)}}$ from the first of the above two equalities and $B_{i, \xi'}=Id_{X^*_{(i)}}$ from the second one. In consequence
$$
Id_{X^*_{(i)}}=B_{i, \xi'}(B_{i, \xi})^{-1}=B_{i,\xi'}B_{i, \xi^{-1}}=B_{i, \xi^{-1}\xi'}=B_{i,  \xi_{II}^{-1}\widetilde{\xi_{II}}}.
$$
Thus, by the membership (\ref{e010}) and the above equality we obtain: $B_{i, \eta_1}=Id_{X^*_{(i)}}$ or $B_{i, \eta_2}=Id_{X^*_{(i)}}$.  By formula~(\ref{e5}) the restrictions of $B_{i, a}$ and $B_{i, b}$ to the set of one-letter words coincide with $\sigma_i$ and $\tau_i$, respectively. Since we have:
\begin{eqnarray*}
B_{i, \eta_1}&=&B_{i, a^r(b^{-1}a)^{2l}b^{-r}}=B_{i, b}^{-r}(B_{i, a}B_{i, b}^{-1})^{2l}B_{i, a}^r,\\
B_{i, \eta_2}&=&B_{i, a^{-r}(ba^{-1})^{2l}b^r}=B_{i, b}^r(B_{i, a}^{-1}B_{i, b})^{2l}B_{i, a}^{-r},
\end{eqnarray*}
the restrictions of $B_{i, \eta_1}$ and $B_{i,\eta_2}$ to the set of one-letter words coincide with the permutations, respectively, $\pi_1$ and $\pi_2$ of the set $X_i$, where:
\begin{eqnarray*}
\pi_1&=&\tau_i^{-r}(\sigma_i\tau_i^{-1})^{2l}\sigma_i^r=\tau_i^r(\sigma_i\tau_i)^{2l}\sigma_i^r,\\
\pi_2&=&\tau_i^{r}(\sigma_i^{-1}\tau_i)^{2l}\sigma_i^{-r}.
\end{eqnarray*}
Since $i>k$, we have: $r_i\geq r_k>2l+r+2$. Beside that $\tau_i(x)=x$ for every $x\in X_i\setminus\{1, 2\}$. In consequence,  $\pi_1$ sends $3$ into $\pi_1(3)=\sigma_i^{2l+r}(3)\neq 3$. Similarly, $\pi_2$ sends $r_i$ into $\pi_2(r_i)=\sigma_i^{-2l-r}(r_i)\neq r_i$. In particular, neither $\pi_1$ nor $\pi_2$ is an identity permutation and we obtain the contradiction.
\end{proof}

{\bf Acknowledgements.} I am grateful to the referee  for pointing out the necessity for making changes in the introduction and  for detailed comments as well as for  remarkable insights concerning the  proofs of  Proposition~1, Lemma~1 and Proposition~8.

\end{document}